\newtheorem{example}{Example}
\newtheorem{assumption}{Assumption}
\title{Continuous-Time Convergence Rates in Potential and Monotone Games \thanks{Submitted to the editors February 2, 2022.}
\funding{This work was supported by a grant from NSERC and Huawei Technologies Canada.}}
\author{Bolin Gao\thanks{Department of Electrical and Computer Engineering, University of Toronto, Toronto, ON, M5S 3G4, Canada. \email{bolin.gao@mail.utoronto.ca}}
\and Lacra Pavel\thanks{Department of Electrical and Computer Engineering, University of Toronto, Toronto, ON, M5S 3G4, Canada. \email{pavel@control.utoronto.ca}}
}
\DeclareMathOperator{\dom}{dom}
\DeclareMathOperator{\interior}{int}
\DeclareMathOperator{\rinterior}{rint}
\DeclareMathAlphabet{\mathpzc}{OT1}{pzc}{m}{it} 
\newcommand\oprocendsymbol{\hbox{$\square$}}
\newcommand\oprocend{\relax\ifmmode\else\unskip\hfill\fi\oprocendsymbol}
\DeclareSymbolFont{bbold}{U}{bbold}{m}{n}
\DeclareSymbolFontAlphabet{\mathbbold}{bbold}
\begin{document}

\maketitle

\begin{abstract}
  In this paper, we provide exponential rates of convergence to the interior Nash equilibrium for continuous-time dual-space game dynamics such as mirror descent (MD) and actor-critic (AC). We perform our analysis in $N$-player continuous concave games that satisfy certain monotonicity assumptions while possibly also admitting potential functions. In the first part of this paper, we provide a novel \textit{relative} characterization of monotone games and show that MD and its discounted version converge with  $\mathcal{O}(e^{-\beta t})$ in relatively strongly and relatively hypo-monotone games, respectively. In the second part of this paper, we specialize our results to games that admit a \textit{relatively} strongly concave potential and show that AC converges with $\mathcal{O}(e^{-\beta t})$.  These rates extend their known convergence conditions. Simulations are performed which empirically back up our results.
\end{abstract}

\begin{keywords}
  Potential Game, Monotone Game, Mirror Descent, Actor-Critic, Rate of Convergence, Multi-Agent Learning
\end{keywords}

\section{Introduction}

Due to an ever-increasing number of applications that rely on the processing of massive amount of data, e.g., \cite{Bottou18, Daskalakis18, Goodfellow14, Zhang19}, the rate of convergence has became a paramount concern in the design of algorithms. A prominent line of recent work involves \textit{analyzing the rate of convergence of ordinary differential equations (ODE) via Lyapunov analysis} in order to characterize and enhance the rates of their discrete-time counter-parts, e.g., \cite{Orecchia19, Krichene15, Raginsky12, Su}. For instance, in the \textit{primal-space} where the iterates are directly updated, it is known that the function value along the (time-averaged) trajectory of gradient flow converge to the optimum of convex problems in $\mathcal{O}(1/t)$\footnote{Recall that given $f, g: \mathbb{R} \to \mathbb{R}, f(t) \in \mathcal{O}(g(t))$ (or $f(t) = \mathcal{O}(g(t))$) if $\exists M, T > 0$, such that $|f(t)| \leq Mg(t), \forall t > T.$ }\cite{Krichene15, Su}. In the \textit{dual-space}, whereby the gradient is processed and mapped back through a \textit{mirror operator}, a $\mathcal{O}(1/t)$ rate was shown for continuous-time mirror descent \cite{Krichene15, Raginsky12}. These rates were later improved to $\mathcal{O}(1/t^2)$ through the design of non-autonomous ODEs \cite{Krichene15, Su}.

Although convergence rates have been thoroughly studied in the optimization setup, a similar analysis for the analogous continuous game setting, particularly for $N$-player games with continuous-time game dynamics, has been far less systematic. This could be due to several key distinctions between the two setups:
\begin{enumerate}[leftmargin = *]
	\item While in the optimization framework, the global optimum (or the function value at the optimum) is the target for which the rate of convergence is measured, in a game, there can exist multiple desirable solution concepts, e.g.,  dominant strategies, pure, mixed, along with various notions of perturbed Nash equilibrium and their refinements \cite{Weibull}. Hence in any given game there can exist multiple metrics and targets for which the rate is measured. 
	\item Unlike the optimization setup, the rate metric depends on multiple payoff/cost functions instead of a single one. Even for the simpler setting of two-player zero-sum games, each player's payoff function is a saddle function (e.g., convex in one argument, concave in the other). Therefore many useful properties widely employed in optimization-centric rate analysis cannot be applied to the whole argument of any player's individual payoff function.
	\item The issue of \textit{convergence}, on which the rate analysis necessarily rests upon, is also more complex. It is well-known that even the most prototypical dynamics for games such as (pseudo-)gradient and mirror descent dynamics may not necessarily converge \cite{Hart03} and can cycle in perpetuity for zero-sum games \cite{Mertikopoulos18}. Outside of zero-sum games, e.g., certain Rock-Paper-Scissors games with non-zero diagonal terms, game dynamics can exhibit limit cycles or chaos \cite{Sato}. Hence, the convergence as well as the rate for which these dynamics achieve must be qualified in terms of more complex properties that characterize the entire set of payoff functions.		
\end{enumerate} 

Motivated by these questions, in this paper we characterize the rate of convergence for two general families of dual-space dynamics towards the interior Nash equilibrium (or Nash-related solutions) of $N$-player continuous concave games, specifically, in games that satisfy certain monotonicity conditions, which may also possess potential functions. These games are referred to as monotone games and potential games, respectively. Prototypical examples of potential games include standard formulations of Cournot games, symmetric quadratic games, coordination games, and flow control games, whereas monotone games capture examples of network zero-sum games, Hamiltonian games, asymmetric quadratic games, various applications arising from networking and machine learning such as generative adversarial networks (GAN) \cite{Daskalakis18, Goodfellow14}, adversarial attacks \cite{Zhang19}, as well as mixed-extension of finite games, such as Rock-Paper-Scissors and Matching Pennies \cite{Basar12, Bo_LP_CDC2020, Bo_LP_TAC2020}.

{\textit{Literature review.}} We provide a non-exhaustive survey of the rates of convergence of continuous game dynamics in $N$-player continuous games prior to our work. We broadly divide these results in two prominent settings: those belonging to mixed-strategy extension of finite normal-form games, hereby referred to as \textit{mixed games} for brevity, and more general types of continuous games. For similar discussion in the related framework of population games, see recent work such as \cite{ochoa2019hybrid}.

For mixed games, the earliest works showed that the rate of elimination of strictly dominated strategies (which can be thought of as the rate of divergence) in $N$-player mixed games for ($n^\text{th}$-order variant) replicator dynamics is exponential \cite{Laraki13, Weibull}, which was generalized by \cite{Mertikopoulos16} for dynamics arising from alternative choices of regularizers. The rates for continuous-time fictitious play and best-response dynamics in regular (exact) potential games were shown to be exponential in \cite{Swenson17FP, Swenson18BR}. Exponential convergence was shown for continuous-time fictitious and gradient play with \textit{derivative action} in \cite{Shamma05} for two-player games, which is generalizable to $N$-players games. Except for \cite{Mertikopoulos16}, all of these works involve strategy updates in the primal-space. In the dual space, a general result by \cite{Mertikopoulos18} showed that the continuous-time Follow the Regularized Leader minimizes the \textit{averaged regret} in mixed games with rate $\mathcal{O}(1/t)$.

For continuous games beyond mixed games, early work by \cite{Flam90} provided exponential convergence of continuous-time projected subgradient dynamics in a (restricted) strongly monotone game. Exponential convergence was shown for projected gradient dynamics under full and partial information setups in \cite{Dian_Pavel_TAC2019}. Exponential stability of NE can also be shown for various continuous-time dynamics, such as extremum-seeking dynamics \cite{Basar12}, gradient-type dynamics with consensus estimation \cite{Ye17}, affine nonlinear dynamics \cite{Huang20}, among others. We note that all the dynamics listed above are in the primal-space, whereby these rates are characterized in the Euclidean sense. Furthermore, all the authors \cite{Basar12, Huang20, Ye17} place a standard strict diagonal dominance condition on the game's Jacobian at the NE. For dual-space dynamics, \cite{mertikopoulos2017convergence} showed that dual averaging (or lazy mirror descent) converges towards a globally strict variationally stable state in $\mathcal{O}(1/t)$ in terms of an \textit{average equilibrium gap}, which also holds in strictly monotone games.

{\textit{Contributions.}}
Our work provides a systematic Lyapunov-based method for deriving continuous rate of
convergence of dual-space dynamics towards interior Nash-type solutions in $N$-player games in terms of the actual sequence of play. We investigate two general classes of dual-space dynamics, namely, mirror descent (MD) \cite{Mertikopoulos18, Mertikopoulos16, mertikopoulos2017convergence} (and its \textit{discounted} variant \cite{Bo_LP_TechNote19}) and actor-critic (AC) dynamics (closely related to \cite{Krichene15, Leslie03, Perkins17}). We go beyond the classical proof of exponential convergence (or stability) by providing a precise characterization of the rate's dependency on the game's inherent geometry as well as the player's own parameters. As such, we provide theoretically justified  reasonings for choosing between these dynamics based on their rates of convergence.

Our work is divided into two parts. First, we consider monotone games and provide novel \textit{relative} notions of monotonicity. Under this new characterization, we provide exponential rates of convergence for MD and its discounted variant in all major regimes of monotone games, which extends their previously known convergence conditions \cite{Bo_LP_TechNote19, Bo_LP_TAC2020, mertikopoulos2017convergence} in terms of the actual iterates. In the second part, we specialize our results to a potential game setup and provide exponential rates of convergence of AC in relatively strongly concave potential games. An abridged version of the above results in potential games can be found in \cite{Bo_LP_CDC2020}, but without proofs. The commonality of our approach in the potential and monotone games involves making use of the properties of the game's pseudo-gradient, as well as exploiting non-Euclidean generalizations of convexity and monotonicity. This lends generality to our results and enables us to provide the rate of convergence towards interior solutions for all dynamics studied in \cite{Cherukuri17, Coucheney15, Bo_LP_TechNote19, Mertikopoulos18} and for some of the dynamics studied in \cite{Bo_LP_TAC2020, Laraki13, Mertikopoulos16}. In contrast to \cite{Flam90, Dian_Pavel_TAC2019, Shamma05, Swenson17FP, Swenson18BR}, where exponential rates were shown in the Euclidean sense, our analysis uncovers the exact parameters that affect such rates in a non-Euclidean setup, and provides rates both in terms of Bregman divergences and Euclidean distances. In contrast to \cite{Mertikopoulos18, mertikopoulos2017convergence}, we provide convergence rate of the actual iterates as opposed using either time-averaged regret or equilibrium gaps.

Finally, we remind the reader that our results presented here should not be taken as indicative of the rates associated with their discrete-time counter-parts. Indeed, as discussed in \cite{Krichene15}, multiple discretization schemes can correspond to a single ODE, and not all preserve the continuous-time rate. Furthermore, the choice of step-sizes, absent in our analysis, is also crucial in determining the rates of discrete-time algorithms \cite{Cohen17_Hedge}. For additional rate analyses performed in discrete-time in continuous game setups, refer to  \cite{Azizian20, Kadan, Cohen17, Nedic18, mertikopoulos2017convergence, Mokhtari2020, Li19, Wang18}.

{\textit{Paper organization.}}
This paper is organized as follows. In Section~\ref{sec:review_concepts}, we provide the preliminary background. 
Section~\ref{sec:Construction_of_Game_Dynamics} discusses MD (and its discounted variant called DMD) and AC.  Section~\ref{sec:monotone_games} introduces relatively strong and relatively hypo-monotone games and provides rates of convergence for MD and DMD in these regimes. In Section~\ref{sec:relatively_strongly_concave}, we provide rate results for relatively strongly concave potential games for AC. Numerical simulations are presented in Section~\ref{sec:case_studies}. Section~\ref{sec:conclusions} provides the conclusion followed by  Section~\ref{sec:appendix} which contains the proofs of all results.

\section{Review of notation and preliminary concepts}
\label{sec:review_concepts}
\hfill\\
\indent {\textit{Convex Sets, Fenchel Duality, and Monotone Operators}} The following is from \cite{Beck17, Facchinei_I, Rockafellar}. Given a convex set $\mathcal{C} \!\subseteq \!\mathbb{R}^n$, the (relative) interior of $\mathcal{C}$ is denoted as ($\rinterior(\mathcal{C})$) $\interior(\mathcal{C})$.
$\rinterior(\mathcal{C})$ coincides with $\interior(\mathcal{C})$ whenever $\interior(\mathcal{C}) \neq \varnothing$.  $\textstyle \pi_{\mathcal{C}}(x) \!=\!  \text{argmin}_{y \in \mathcal{C}} \|y - x\|_2^2$ denotes the Euclidean projection of $x$ onto $\mathcal{C}$. The simplex in $\mathbb{R}^n$ is denoted as $\Updelta^n = \{ x\in \mathbb{R}^n| \sum_{i = 1}^n x_i = 1, x_i \geq 0\}$. The normal cone of a convex set $\mathcal{C}$ at $x \in \mathcal{C}$ is defined as $N_\mathcal{C}(x) =\{v \in \mathbb{R}^n| v^\top(y-x) \leq 0, \forall y \in \mathcal{C}\}$.  Let $\mathbb{E} = \mathbb{R}^n$ be endowed with norm $\|\cdot\|$ and inner product $\langle \cdot, \cdot \rangle$. An extended real-valued function is a function $f$ that maps from $\mathbb{E}$ to $[-\infty, \infty]$. The (effective) domain  of $f$ is $\dom(f) = \{x \in \mathbb{E} : f(x) < \infty\}$. 
A function $f: \mathbb{E} \to [-\infty, \infty]$ is proper if it does not attain the value $-\infty$ and there exists at least one $x \in \mathbb{E}$ such that $f(x)  < \infty$; it is closed if its epigraph is closed. Given $f$, the function $f^\star \!: \!\mathbb{E}^\star \! \to \! [-\infty, \infty]$ defined by $f^\star(z) \!=\! \sup_{x  \in \mathbb{E}}  \big[x^\top z - f(x)\big]$, 
is called the convex conjugate of $f$, where  $\mathbb{E}^\star\!$ is the dual-space of $\mathbb{E}$, endowed with the dual norm $\| \cdot\|_\star $.   $f^\star$ is closed and convex if $f$ is proper.  Let  $\partial f(x)$ denote a subgradient of $f$ at $x$ and  $\nabla f(x)$ the gradient of $f$ at $x$, if $f$ is differentiable.
The Bregman divergence of a proper, closed, convex function $f$, differentiable over $\dom(\partial f)$, is $D_f \!:\! \dom(f) \!\times \!\dom(\partial f) \to \mathbb{R}, D_f(x,y) \!=\! f(x) \!-\! f(y) \!-\! \nabla f(y)^\top(x\!-\!y)$, where $\dom(\partial f) = \{x \in \mathbb{R}^n | \partial f(x) \neq \varnothing\}$ is the effective domain of $\partial f$.  $F \!: \!\mathcal{C} \! \subseteq \!\mathbb{R}^n \!\to \!\mathbb{R}^n$ is monotone  if $(F(z) \!-\! F(z^\prime))^\top (z-z^\prime) \!\geq\! 0$, $\forall z,z^\prime \! \in \! \mathcal{C}$.
$F$ is $L$-Lipschitz on $\mathcal{C}$ if $\|F(z) \! -\! F(z^\prime)\|_2 \!\leq  \!L \|z \!-\! z^\prime\|_2$, $\forall z,z^\prime \! \in \! \mathcal{C}$, for some $L \!>\! 0$. Suppose $F$ is the gradient of a scalar-valued function $f$, then $f$ is $\ell$-smooth if $F$ is $\ell$-Lipschitz.
The Jacobian of $F$ is denoted as $\mathbf{J}_F$.

{\textit{$N$-Player Continuous Concave Games} Let $\mathcal{G} = (\mathcal{N}, \{\Omega^p\}_{p \in \mathcal{N}}, \{\mathcal{U}^p\}_{p\in \mathcal{N}})$ be a game, where  $\mathcal{N} = \{1, \ldots, N\}$ is the set of players, $\Omega^p \subseteq \mathbb{R}^{n_p}$ is the set of player $p$'s strategies. We denote the strategy set of player $p$'s opponents as $\Omega^{-p} \subseteq \prod_{q \in \mathcal{N}, q \neq p} \mathbb{R}^{n_q}$ and the set of all the players strategies as $\Omega =   \prod_{p \in \mathcal{N}} \Omega^{p} \subseteq \prod_{p \in \mathcal{N}} \mathbb{R}^{n_p} = \mathbb{R}^{n}, n = \sum_{p \in \mathcal{N}} n_p$. We refer to  $\mathcal{U}^p: \Omega \to \mathbb{R}, x \mapsto \mathcal{U}^p(x)$, as player $p$'s real-valued payoff function, where $x  = (x^p)_{p \in \mathcal{N}} \in \Omega$ is the action profile of all players, and $x^p \in \Omega^p$ is the action of player $p$. We also  denote $x$ as $x  = (x^p;x^{-p})$ where $x^{-p} \in \Omega^{-p}$ is the action profile of all players except $p$. For differentiability purposes, we make the implicit assumption that there exists some open set, on which $\mathcal{U}^p$ is defined and continuously differentiable, such that it contains $\Omega^p$. 
	\begin{assumption} For all $p \in \mathcal{N}$,
		\label{assump:blanket}
		$\Omega^p$ is a non-empty, compact, convex, subset of $\mathbb{R}^{n_p}$, $\mathcal{U}^p(x^p;x^{-p})$ is (jointly) continuous in $x = (x^p;x^{-p})$, $\mathcal{U}^p(x^p;x^{-p})$ is concave and continuously differentiable in each $x^p$ for all $x^{-p} \in \Omega^{-p}$. 
	\end{assumption} 
	Under \cref{assump:blanket},  $\mathcal{G}$ is a \textit{continuous (concave) game}. 
	Given $x^{-p} \in \Omega^p$, 
	each agent $p \in \mathcal{N}$ aims to find the solution of the following optimization problem,  
	\begin{equation}
		\begin{aligned}
			& \underset{x^p}{\text{maximize}}
			& &  \mathcal{U}^p(x^p; x^{-p})
			& \text{subject to}
			& & x^p \in \Omega^p.
		\end{aligned}
	\end{equation}
 	
	A profile 	${{x}}^\star \!=\! ({{x}^p}^\star)_{p \in \mathcal{N}} \!\in \!\Omega$  is a Nash equilibrium (NE) if, 
	\begin{equation}\label{Nash_definition}
		\mathcal{U}^p({x^p}^\star; {x^{-p}}^\star) \geq \mathcal{U}^p(x^p; {x^{-p}}^\star), \forall x^p \in \Omega^p, \forall p \in \mathcal{N}.
	\end{equation}
	At a NE, no player can increase his payoff by unilateral deviation. Under \cref{assump:blanket}, the existence of a NE is guaranteed \cite[Theorem 4.4]{Basar}. 
	
	A useful characterization of a NE of a concave game $\mathcal{G}$ is in terms of the \textit{pseudo-gradient}, $U: \Omega \to \mathbb{R}^n, U(x) \!=\!(U^p(x))_{p \in \mathcal{N}}$, where 
	$U^p(x) =\nabla_{x^p} \mathcal{U}^p(x^p;x^{-p})$ is the \textit{partial-gradient} of player $p$ \footnote{When $\interior(\Omega^p) = \varnothing$, $U^p$ is the partial-gradient of $\mathcal{U}^p$ taken with respect to $\rinterior(\Omega^p) \neq \varnothing$.}. 
	We make the following common regularity assumption.	

	\begin{assumption}
		$U: \Omega \to \mathbb{R}^n$ is $L$-Lipschitz on $\Omega$ (possibly excluding the relative boundary). 
	\end{assumption}

	By \cite[Proposition 1.4.2] {Facchinei_I}, $x^\star \in \Omega$ is a NE if and only if,
	\begin{equation}
		(x-x^{\star} )^\top U(x^{\star}) \leq 0, \forall x \in \Omega. 
		\label{eqn:nash_equilibrium_VI}
	\end{equation}	
	The NE $x^\star$ is said to be \textit{interior} if $x^\star \in \rinterior(\Omega)$.
\begin{definition}
	\label{def:potential_game} 
	A concave game $\mathcal{G}$ is an exact potential game if	there exists a scalar-valued function $P: \Omega \to \mathbb{R}$, referred to as the \textit{potential function}, such that, $\forall p \in \mathcal{N}, x^{-p} \in \Omega^{-p}$, $x^p, x^{p\prime} \in \Omega^p$,
	\vspace{-0.2cm}
	\begin{equation}
		\mathcal{U}^p(x^p; x^{-p})\! - \!\mathcal{U}^p(x^{p\prime}; x^{-p}) \!=\! P(x^p; x^{-p})\! -\! P(x^{p\prime}; x^{-p}).
	\end{equation}
\end{definition} 
From \cref{def:potential_game}, it is clear that $U = \nabla P$  whenever $P$ is differentiable, therefore by \eqref{eqn:nash_equilibrium_VI}, any global maximizer of $P$ is a NE. $\mathcal{G}$ can be shown to be an exact potential game whenever the Jacobian of the pseudo-gradient,  $\mathbf{J}_U(x)$, is symmetric for all $x$ \cite[Theorem 1.3.1, p. 14]{Facchinei_I}.

\section{Dual-Space Game Dynamics}
\label{sec:Construction_of_Game_Dynamics}

\indent In this section, we introduce several dual-space game dynamics that have been previously studied in the continuous game literature. We motivate these dynamics through the following interaction model: suppose a set of players are repeatedly interacting in a game $\mathcal{G}$. Starting from an initial strategy $x^p(0) \in \Omega^p$, each player $p$ plays the game and obtains a partial-gradient $U^p(x) \!\in \! \mathbb{R}^{n_p}$. Each player then maps his own partial-gradient vector $U^p(x)$ into an unconstrained \textit{auxiliary variable} or \textit{dual aggregate} $z^p \! \in \! \mathbb{R}^{n_p}$ via a dynamical process. Then a device known as the \textit{mirror map} $C^p_\epsilon: \mathbb{R}^{n_p} \to \Omega^p$ suggests to the player a strategy, for which the player can either directly use as his next strategy or process it further. The game is then played again using the players' chosen strategies.  The two main classes of dual-space game dynamics which correspond to this setup are the family mirror descent (MD) and actor-critic (AC) dynamics, which we discuss below.

{\textit{Mirror Descent}}
	The most commonly studied class of dual-space dynamics is MD, which consists of the following system of ODEs, 
	\begin{equation}
		\label{eqn:MD}
		\dot z^p = \gamma U^p(x), \quad 	x^p = C^p_\epsilon(z^p),
	\end{equation}
	where $C^p_\epsilon$ is the \textit{mirror map}, $C^p_\epsilon: \mathbb{R}^{n_p}  \to  \Omega^p$,\vspace{-0.23cm}
	\begin{equation} 
		\label{eqn:mirror_map_argmax_char} 
		C^p_\epsilon(z^p) = \text{argmax}_{y^p \in \Omega^p}\left[ {y^p}^\top z^p  - \epsilon \vartheta^p(y^p)\right], \epsilon > 0,
	\end{equation} 
	where $\vartheta^p: \mathbb{R}^{n_p} \to \mathbb{R}^{n_p}\cup\{\infty\}$ is assumed to be a closed, proper and strongly convex function, referred to as a \textit{regularizer} and $\dom(\vartheta^p) = \Omega^p$ is assumed be a non-empty, compact and convex set.
	
	Depending on  $C^p_\epsilon$, \eqref{eqn:MD} captures a wide range of existing game dynamics. For general $C^p_\epsilon$, it represents the continuous-time, game theoretic extension of dual averaging or lazy mirror descent \cite{Duchi, mertikopoulos2017convergence} or Follow-the-Regularized-Leader \cite{Mertikopoulos18}. When $C^p_\epsilon$ is the identity, \eqref{eqn:MD} captures the pseudo-gradient dynamics (for $p = N$) \cite{Bo_LP_TechNote19}, the saddle-point dynamics \cite{Cherukuri17} (for $p = 2$), and the gradient flow (for $p = 1$). When $C^p_\epsilon$ is the \textit{softmax} function, \eqref{eqn:MD} corresponds to exponential learning \cite{Mertikopoulos16}, which induces the replicator dynamics \cite[p. 126]{Sandholm10} as its primal dynamics. 
	
	A closely related set of dynamics is the \textit{discounted} mirror descent (DMD) 	 \cite{Coucheney15, Bo_LP_TechNote19}, 
	\begin{equation}
		\label{eqn:DMD}
		\dot z^p = \gamma (-z^p + U^p(x)), \quad 	x^p = C^p_\epsilon(z^p).
	\end{equation}
	Compared to MD, an extra $-z^p$ term is inserted in the $\dot z^p$ system, which translates into an exponential decaying term in the closed-form solution, i.e., $z^p(t) = e^{{-\gamma}t} z^p(0) + \gamma \smallint\nolimits_0^t e^{{-\gamma}(t-\tau)} U^p(x(\tau)) \mathrm{d}\tau$. DMD is also related to the \textit{weight decay} method in the machine learning literature \cite{hanson1988comparing}, as $z^p \in ({C^p_\epsilon})^{-1}(x^p)$ can be shown to be equivalent to a regularization term, which directly interacts with the monotonicity property of $U^p$.

{\textit{Actor-Critic}} A second class of dual-space dynamics is the family of AC dynamics,
	\begin{equation}
		\label{eqn:AC}
		\dot z^p =  \gamma U^p(x), \quad  
		\dot x^p = r(C^p_\epsilon(z^p) - x^p).
	\end{equation}
	In contrast to MD, AC models the scenario whereby the player further processes the output of $C^p_\epsilon$ \eqref{eqn:mirror_map_argmax_char} through discounted aggregation in the primal-space. AC has been previously investigated in the game and optimization literature. For example, a version of AC with time-varying coefficients known as \textit{accelerated mirror descent} (AMD) was studied in \cite{Krichene15} in a convex optimization setup. In games, AC is related to the continuous-time version of the algorithm by the same name in \cite{Leslie03, Perkins17} and can be seen as a dual-space extension of the \textit{logit dynamics} \cite[p. 128]{Sandholm10}. Differing from MD, for which convergence in strictly monotone games is known (see \cite{mertikopoulos2017convergence}), AC-type dynamics have only been investigated in potential game setups and AC is not known to converge in games that do not admit potentials \cite{Leslie03, Perkins17}.

{\textit{Construction of the Mirror Map}} The definition of the mirror map $C^p_\epsilon$ \eqref{eqn:mirror_map_argmax_char}  is intimately tied to the properties of the regularizer $\vartheta^p$. In this work, we assume that $\vartheta^p$ satisfies the following basic assumption.

\begin{assumption}
	\label{assump:primal}
	The regularizer $\vartheta^p\!:\! \mathbb{R}^{n_p} \!\!\to \!\mathbb{R}\!\cup\!\{\!\infty \!\}$ is closed, proper, $\rho$-strongly convex, with $\dom(\vartheta^p) = \Omega^p $ non-empty, compact, convex.
\end{assumption}

We further classify $\vartheta^p$ into either \textit{steep or non-steep}. $\vartheta^p$ is said to be steep if  $\|\nabla \vartheta^p(x_k^p)\|_\star \to +\infty$ whenever $\{x_k^p\}_{k = 1}^\infty$ is a sequence in $\rinterior(\dom(\vartheta^p))$ converging to a point in the (relative) boundary. It is non-steep otherwise.

In order to properly incorporate the regularization parameter $\epsilon$,
cf. \!\eqref{eqn:mirror_map_argmax_char}, we consider $\psi_\epsilon^p \coloneqq \epsilon \vartheta^p\!$, which inherits all properties of $\vartheta^p\!$.  
We denote the convex conjugate of $\psi_\epsilon^p$ as ${\psi_\epsilon^p}^\star = {\text{max}}_{y^p \in \Omega^p}\left[ {y^p}^\top z^p  - \epsilon \vartheta^p(y^p)\right]$. We then refer to $C^p_\epsilon$ as the \textit{mirror map induced by $\psi_\epsilon^p\!$}. The key properties of $C^p_\epsilon\!$ under \cref{assump:primal} are presented in the Appendix. Next, we introduce two examples of mirror maps.

\begin{example} (Strongly convex, non-steep)
	\label{examp:projection}
	Let $\Omega^p \subset \mathbb{R}^{n_p}$ be nonempty, compact and convex. Consider $ \vartheta^p(x^p) = \frac{1}{2}\|x^p\|^2_2$, $x^p \in \Omega^p$. The mirror map generated by $\psi_\epsilon^p = \epsilon \vartheta^p$ is the \textit{Euclidean projection}, $ C^p_\epsilon(z^p) = \pi_{\Omega^p}(\epsilon^{-1} z^p) = \text{argmin}_{y^p \in \Omega^p} \|y^p - \epsilon^{-1} z^p\|_2^2$.
\end{example} 

\begin{example}(Strongly convex, steep)
	\label{examp:softmax}
	Let $\Updelta^{n_p} =  \Omega^p \! = \{x^p \! \in \!\mathbb{R}^{n_p}_{\geq 0}|\| x^p\|_1 = 1\}$ and $\textstyle \vartheta^p(x^p) =  \sum_{i = 1}^{n_p} x_i^p \log(x_i^p)$ ($0 \log 0 =0$). The mirror map generated by $\psi_\epsilon^p = \epsilon \vartheta^p$ is referred to as \textit{softmax} or \textit{logit map}, $ \textstyle C^p_\epsilon(z^p) \!=\!(\exp(\epsilon^{-1} \!z^p_i)(\sum_{j \! = \!1}^{n_p} \exp(\epsilon^{-1} \!z_j^p))^{-1})_{i \in \{1,\dots,n_p\}}.$
\end{example}

\section{Rate of Convergence in Monotone Games}
\label{sec:monotone_games}

In this section, we discuss the convergence rate of the family of MD dynamics in monotone games. Let's  recall the standard definitions associated with the class of \textit{monotone}  games \cite{Facchinei_I}. 
\begin{definition}
	\label{def:monotone}
	The game $\mathcal{G}$ is,
	\begin{itemize}
		\item [(i)] null monotone if, 
		$(U(x) - U(x^\prime))^\top(x-x^\prime) = 0, \forall x, x^\prime \in \Omega.$
		\item [(ii)] merely monotone if,
		$		(U(x) - U(x^\prime))^\top(x-x^\prime) \leq 0, \forall x, x^\prime \in \Omega.$
		\item[(iii)] strictly monotone if,
		$	(U(x) - U(x^\prime))^{\!\top}\!(x-x^\prime) \!<\! 0, \forall x \! \neq \!x^\prime  \in \Omega.
		$
		\item[(iv)] $\eta$-strongly monotone if, 
		$		\!(U(x) - U(x^\prime))\!^\top\!(x-x^\prime) \!\leq \! -\eta\|x \!-\! x^\prime\|^2_2$, $\! \forall x,x^\prime  \!\in \!\Omega$, for some $\eta \!>\! 0$.
		\item[(v)] $\mu$-hypo-monotone if, 
		$		\!(U(x) - U(x^\prime))\!^\top\!(x-x^\prime) \!\leq \! \mu\|x \!-\! x^\prime\|^2_2$, $\! \forall x,x^\prime  \!\in \!\Omega$, for some $\mu \!>\! 0$.
	\end{itemize} 
\end{definition}\vspace{-0.25cm}
These monotonicity properties have well-known second-order characterizations using the negative semi-definiteness of the Jacobian of $U$ \cite[p. 156]{Facchinei_I}. When $\mathcal{G}$ does not admit a potential function, e.g., $\mathbf{J}_U$ is asymmetric, we refer to  $\mathcal{G}$ as \textit{potential-free}. Next, we introduce a generalization to strong/hypo monotonicity in terms of a \textit{relative} (or reference) function $h$, which helps to capture the geometry associated with the game arising from either the payoff functions or the players' action sets.

\begin{definition}
	\label{def:relative_monotone}
	Let $h: \mathbb{R}^n \to \mathbb{R} \cup \{\infty\}$ be any differentiable, strongly convex function with domain $\dom(h) = \Omega$, then $\mathcal{G}$ is, 
	\begin{itemize}
		\item[(i)] $\eta$-relatively strongly monotone (with respect to $h$) if,  
		$		\!(U(x) \!-\! U(x^\prime))\!^\top\!(x\!-\!x^\prime) \!\leq \! -\eta(D_h(x, x^\prime) + D_h(x^\prime, x))$, $\! \forall x,x^\prime  \!\in \! \dom(\partial h) \subseteq \Omega$, for some $\eta \!>\! 0$.
		\item[(ii)] $\mu$-relatively hypo-monotone  (with respect to $h$) if, 
		$		\!(U(x) \!-\! U(x^\prime))\!^\top\!(x\!-\!x^\prime) \!\leq \! \mu(D_h(x, x^\prime) + D_h(x^\prime, x))$, $\! \forall x,x^\prime  \!\in \! \dom(\partial h) \subseteq \Omega$, for some $\mu \!>\! 0$.
	\end{itemize} 
\end{definition}
For the case where $x\in \Omega \subseteq \mathbb{R}^n$, $h(x) = \frac{1}{2}\|x\|_2^2$ , $D_h(x, x^\prime) = D_h(x^\prime, x) = \frac{1}{2}\|x - x^\prime\|_2^2$, thus relative (strongly/hypo) monotonicity coincides with standard (strongly/hypo) monotonicity. These definitions are restricted to $\dom(\partial h)$ to account for the cases when $h$ is steep, in which case $\dom(\partial h) = \rinterior(\Omega)$.  Next, we provide a result that relates standard and \textit{relative} monotonicity for more general classes of $h$. Recall that a differentiable, convex function $h$ is $\ell$-smooth on $\mathcal{C} \subseteq \mathbb{R}^n$ for $\ell \geq 0$ if $\|\nabla h(x) - \nabla h(x^\prime)\|_2 \leq \ell\|x - x^\prime\|_2, \forall x, x^\prime \in \mathcal{C}$. Equivalently, $\ell\|x - x^\prime\|_2^2 \geq (x - x^\prime)^\top(\nabla h(x) - \nabla h(x^\prime))$ by Cauchy-Schwartz inequality.

\begin{proposition}
	\label{prop:U_strongly_vs_relative}
	Suppose the game $\mathcal{G}$ is, 	\begin{itemize}
		\item[(i)] $\eta$-strongly monotone,\! then $\mathcal{G}$ is $\dfrac{\eta}{\ell}$-relatively strongly monotone on $\dom(\partial h)$ with respect to any $\ell$-smooth $h$,
		\item[(ii)] $\mu$-hypo-monotone,\! then $\mathcal{G}$ is $\dfrac{\mu}{\rho}$-relatively hypo-monotone on $\dom(\partial h)$ with respect to any $\rho$-strongly convex $h$,
	\end{itemize} 
	where $h: \mathbb{R}^n \to \mathbb{R} \cup \{\infty\}$ is assumed to be differentiable, (at-least) strictly convex, with domain $\dom(h) = \Omega$.
\end{proposition}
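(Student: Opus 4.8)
The plan is to reduce both statements to a single elementary identity together with the quantitative convexity/smoothness bounds on the reference function $h$. The key identity is that the \emph{symmetrized} Bregman divergence telescopes into an inner product of gradients: for any $x, x' \in \dom(\partial h)$,
\begin{equation}
	\label{eqn:bregman_sym_identity}
	D_h(x,x') + D_h(x',x) = (\nabla h(x) - \nabla h(x'))^\top (x - x').
\end{equation}
This follows immediately by writing out each Bregman divergence from its definition $D_h(x,y) = h(x) - h(y) - \nabla h(y)^\top(x-y)$ and cancelling the function-value terms; no convexity is needed for \eqref{eqn:bregman_sym_identity} itself. So the whole argument rests on controlling the right-hand side of \eqref{eqn:bregman_sym_identity} against $\|x-x'\|_2^2$ from above (for the smooth case) or from below (for the strongly convex case).

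For part (i), I would start from $\eta$-strong monotonicity, $(U(x) - U(x'))^\top(x-x') \leq -\eta\|x-x'\|_2^2$ for all $x,x' \in \Omega$, and combine it with the $\ell$-smoothness of $h$. By the Cauchy--Schwarz form of $\ell$-smoothness recalled just before the proposition, $(\nabla h(x) - \nabla h(x'))^\top(x-x') \leq \ell\|x-x'\|_2^2$, hence via \eqref{eqn:bregman_sym_identity}, $D_h(x,x') + D_h(x',x) \leq \ell\|x-x'\|_2^2$, i.e. $\|x-x'\|_2^2 \geq \tfrac{1}{\ell}\big(D_h(x,x') + D_h(x',x)\big)$ on $\dom(\partial h)$. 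Substituting this lower bound into the strong-monotonicity inequality gives $(U(x) - U(x'))^\top(x-x') \leq -\eta\|x-x'\|_2^2 \leq -\tfrac{\eta}{\ell}\big(D_h(x,x') + D_h(x',x)\big)$, which is exactly $\tfrac{\eta}{\ell}$-relative strong monotonicity with respect to $h$ on $\dom(\partial h)$.

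For part (ii), I would argue symmetrically using $\rho$-strong convexity of $h$ instead: strong convexity gives $D_h(x,x') \geq \tfrac{\rho}{2}\|x-x'\|_2^2$ and $D_h(x',x) \geq \tfrac{\rho}{2}\|x-x'\|_2^2$, so $D_h(x,x') + D_h(x',x) \geq \rho\|x-x'\|_2^2$, i.e. $\|x-x'\|_2^2 \leq \tfrac{1}{\rho}\big(D_h(x,x') + D_h(x',x)\big)$. Plugging this into the $\mu$-hypo-monotonicity bound $(U(x) - U(x'))^\top(x-x') \leq \mu\|x-x'\|_2^2$ yields $(U(x) - U(x'))^\top(x-x') \leq \tfrac{\mu}{\rho}\big(D_h(x,x') + D_h(x',x)\big)$ on $\dom(\partial h)$, which is the claimed $\tfrac{\mu}{\rho}$-relative hypo-monotonicity.

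I do not anticipate a serious obstacle here; the only points requiring care are bookkeeping ones: making sure all manipulations are carried out on $\dom(\partial h)$ (where $h$ is differentiable and the quoted smoothness/strong-convexity estimates are available, and where $\dom(\partial h) = \rinterior(\Omega)$ when $h$ is steep), and checking that the standing assumptions on $h$ in the proposition (differentiable, at least strictly convex, $\dom(h) = \Omega$) are compatible with invoking $\ell$-smoothness in (i) and $\rho$-strong convexity in (ii). The reduction to \eqref{eqn:bregman_sym_identity} is the conceptual crux, and it is purely algebraic.
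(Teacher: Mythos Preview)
Your proposal is correct and follows essentially the same approach as the paper: both hinge on the symmetrized Bregman identity \eqref{eqn:bregman_sym_identity} (which is \cref{lem:prop_bregman}(iii) in the paper), then bound the resulting inner product by $\ell\|x-x'\|_2^2$ from above via smoothness for (i) and by $\rho\|x-x'\|_2^2$ from below via strong convexity for (ii), before substituting into the respective monotonicity inequalities. Your write-up is in fact slightly more explicit than the paper's, which compresses each part into a single line.
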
 

\cref{prop:U_strongly_vs_relative} states that, to generate a relatively strongly/hypo monotone game, one can first produce a standard strongly/hypo monotone game, then such a game will be relatively strongly monotone with respect to any $\ell$-smooth $h$ or relatively hypo-monotone with respect to any $\rho$-strongly convex $h$. The latter case when $\mathcal{G}$ is $\mu$-hypo-monotone constitute an important class of games. It was shown in \cite{Bo_LP_TAC2020} that many examples of mixed games are \textit{both} potential-free and hypo-monotone (also known as \textit{unstable games} in \cite{Bo_LP_TAC2020}). By the $1$-strong convexity of the negative entropy $h(x) = \sum_{p \in \mathcal{N}} {x^p}^\top \log(x^p),  x^p \in \Updelta^{n_p}$ \cite[p. 125]{Beck17}, \cref{prop:U_strongly_vs_relative}(ii) implies that \textit{all $\mu$-hypo-monotone mixed-extension of finite games are $\mu$-relatively hypo-monotone with respect to the negative entropy}. The same is true for $h(x) =  \frac{1}{2} \sum_{p \in \mathcal{N}} \|x^p\|_2^2, x^p \in \Updelta^{n_p}$, i.e., the Euclidean norm on the simplex.

In the following sections, we provide the rates of convergence of MD and its \textit{discounted} version \cite{Bo_LP_TechNote19} in relatively strongly monotone and relatively hypo-monotone games, respectively. While MD with \textit{time-averaged} trajectory is known to converge in null monotone games (such as the Matching Pennies game considered in \cite{Mertikopoulos16}), in this work we are interested in the rate of convergence of the \textit{actual} trajectory.

\vspace*{-0.2cm}
\subsection{Mirror Descent Dynamics}

Consider the stacked-vector representation of the mirror descent dynamics, \eqref{eqn:MD}, with rest point condition given by \eqref{eqn:MD_rest_points}:

\noindent\begin{minipage}{.55\linewidth}
\vspace{-0.3cm}
	\begin{equation}
		\label{eqn:MD_stacked}
		\begin{cases}
			\dot z & =  \gamma  U(x), \gamma  > 0,\\
			x &=  C_\epsilon(z),
		\end{cases}
		\tag{MD}
	\end{equation} 
\end{minipage}%
\begin{minipage}{.45\linewidth}
\vspace{-0.3cm}	
	\begin{equation} 
		\label{eqn:MD_rest_points} 
		\begin{cases}
			 N_\Omega(x^\star) \ni U(x^\star),&\\
			 x^\star  =  C_\epsilon(z^\star),&
		\end{cases}
	\end{equation}
\end{minipage}

\noindent where $U = (U^p)_{p \in \mathcal{N}}, C_\epsilon = (C^p_\epsilon)_{p \in \mathcal{N}}, x = (x^p)_{p \in \mathcal{N}}, z = (z^p)_{p \in \mathcal{N}}$ (similar convention used throughout). Here, we assume $x^\star$ lies in the relative interior of $\Omega$. Global convergence of the strategies generated by MD was shown for strictly monotone games \cite{mertikopoulos2017convergence}. We supplement this convergence result by showing that MD converges exponentially fast towards interior NE in $\eta$-relatively strongly monotone games.

\begin{theorem} 
	\label{thm:convergence_MD_relative_strongly_monotone}
Let $\mathcal{G}$ be $\eta$-relatively strongly monotone with respect to to $\textstyle h(x)= \sum_{p \in \mathcal{N}} \vartheta^p(x^p)$, where  $x \! = (x^p(t))_{p \in \mathcal{N}} =  C_\epsilon(z(t))$ is the solution of MD and $C_\epsilon = (C^p_\epsilon)_{p \in \mathcal{N}}$ is the mirror map induced by $\psi_\epsilon^p = \epsilon \vartheta^p$, where $\vartheta^p$ satisfies  \cref{assump:primal}. Suppose  $x^\star = ({x^p}^\star)_{p \in \mathcal{N}} \in \rinterior(\Omega)$  is the unique interior NE of $\mathcal{G}$ and let $D_{h}$ be the Bregman divergence of $h$. Then for any $\epsilon, \gamma, \eta > 0$ and any $x_0 \! = (x^p(0))_{p\in\mathcal{N}}\! =  C_\epsilon(z(0)), z(0) \in \mathbb{R}^n$,
	$x(t)$ converges to $x^\star$ with the rate, 
	\begin{equation}
		\label{eqn:MD_iterates_relative_strongly_monotone_Bregman}
		\textstyle D_h(x^\star, x) \leq   e^{-\gamma \eta\epsilon^{-1}  t} D_h(x^\star,  x_0).
	\end{equation} 	Furthermore, since $\vartheta^p$ is $\rho$-strongly convex, therefore,
	\begin{equation}
		\hspace*{-0.3cm}
		\label{eqn:MD_iterates_relative_strongly_monotone}
		\begin{split}
			\textstyle  \|x^\star - x\|^2_2 \leq 2\rho^{-1} e^{-\gamma \eta\epsilon^{-1}  t}  D_h(x^\star, x_0).
		\end{split}
	\end{equation} 
\end{theorem}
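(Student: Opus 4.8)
The plan is to run a Lyapunov argument with the Bregman divergence $D_h(x^\star,\cdot)$ --- equivalently, a sum of Fenchel couplings --- as the energy function, which is precisely what the target rate \eqref{eqn:MD_iterates_relative_strongly_monotone_Bregman} suggests. Set $V(t) = D_h(x^\star, x(t))$, where $x(t) = C_\epsilon(z(t))$ solves \eqref{eqn:MD_stacked} and $h = \sum_{p\in\mathcal{N}} \vartheta^p$, $\psi_\epsilon^p = \epsilon\vartheta^p$. Using the appendix properties of the mirror map --- that $C^p_\epsilon = \nabla \psi_\epsilon^{p\star}$, that $z^p \in \partial \psi_\epsilon^p(x^p)$, and hence the Fenchel--Young equality $\psi_\epsilon^p(x^p) + \psi_\epsilon^{p\star}(z^p) = {x^p}^\top z^p$ --- one rewrites $V(t) = \epsilon^{-1}\sum_{p\in\mathcal{N}}\big[\psi_\epsilon^p(x^{p\star}) + \psi_\epsilon^{p\star}(z^p) - {x^{p\star}}^\top z^p\big]$, a quantity that is well-defined and nonnegative (Fenchel--Young) whether or not the regularizers are steep. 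Also note that $x(t) \in \Omega$ for all $t$ since $\range C_\epsilon = \Omega$.

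Next I differentiate $V$ along the trajectory. Since each $\psi_\epsilon^p$ is $\rho$-strongly convex (\cref{assump:primal}), its conjugate $\psi_\epsilon^{p\star}$ is differentiable with $\nabla \psi_\epsilon^{p\star}(z^p) = C^p_\epsilon(z^p) = x^p$, so only the terms $\psi_\epsilon^{p\star}(z^p)$ and ${x^{p\star}}^\top z^p$ vary in time and the chain rule gives
\[
\dot V(t) = \epsilon^{-1}\sum_{p\in\mathcal{N}} (x^p - x^{p\star})^\top \dot z^p = \gamma\epsilon^{-1}\,(x - x^\star)^\top U(x).
\]
I then split $(x - x^\star)^\top U(x) = (x - x^\star)^\top\big(U(x) - U(x^\star)\big) + (x - x^\star)^\top U(x^\star)$. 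The first term is $\le -\eta\big(D_h(x, x^\star) + D_h(x^\star, x)\big) \le -\eta\,D_h(x^\star, x)$ by $\eta$-relative strong monotonicity (\cref{def:relative_monotone}(i)), discarding the nonnegative term $D_h(x,x^\star)$; the second term is $\le 0$ by the variational characterization of the interior NE \eqref{eqn:nash_equilibrium_VI}, since $x(t)\in\Omega$. Hence $\dot V(t) \le -\gamma\eta\epsilon^{-1}\, V(t)$, and Grönwall's (comparison) inequality yields $V(t) \le e^{-\gamma\eta\epsilon^{-1} t} V(0)$, i.e.\ \eqref{eqn:MD_iterates_relative_strongly_monotone_Bregman}. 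Finally, $h$ is $\rho$-strongly convex (a block-separable sum of $\rho$-strongly convex $\vartheta^p$), so $D_h(x^\star, x) \ge \tfrac{\rho}{2}\|x^\star - x\|_2^2$; substituting this into the Bregman bound gives \eqref{eqn:MD_iterates_relative_strongly_monotone}, and in particular $x(t)\to x^\star$.

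The main obstacle is making the first step rigorous when the regularizers are non-steep: then $x^p(t) = C^p_\epsilon(z^p(t))$ can reach the relative boundary of $\Omega^p$, where $\psi_\epsilon^p$ need not be differentiable, so the Bregman divergence $D_h(x^\star, x(t))$ is not literally given by its defining formula. The clean resolution is to carry out the entire argument with the Fenchel coupling $\epsilon^{-1}\sum_{p}\big[\psi_\epsilon^p(x^{p\star}) + \psi_\epsilon^{p\star}(z^p) - {x^{p\star}}^\top z^p\big]$ as the Lyapunov function --- its differentiability requires only the (Lipschitz) smoothness of $\psi_\epsilon^{p\star}$, guaranteed by $\rho$-strong convexity of $\psi_\epsilon^p$ --- and to invoke the appendix lemma identifying it with $D_h(x^\star, C_\epsilon(z))$ wherever the latter is meaningful (in particular in the steep case and at the interior point $x^\star$). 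A secondary, routine point is global existence, uniqueness and continuation of $z(t)$: this follows from $L$-Lipschitzness of $U$ and Lipschitzness of $C_\epsilon$ (both from the standing assumptions), together with $V$ being nonincreasing so that the trajectory stays in a bounded sublevel set.
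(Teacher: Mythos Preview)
Your proof is correct and follows essentially the same route as the paper: the paper takes $V_z(t)=\gamma^{-1}\sum_{p} D_{{\psi_\epsilon^p}^\star}(z^p,z^{p\star})$ as Lyapunov function, which (via Fenchel--Young at $z^{p\star}$) is exactly $\gamma^{-1}\epsilon$ times your Fenchel coupling, differentiates to obtain $(x-x^\star)^\top U(x)$, uses the normal-cone/NE inequality for $(x-x^\star)^\top U(x^\star)\le 0$ and relative strong monotonicity for the remaining term, and closes the loop with the duality identity $D_{\psi_\epsilon^p}(x^{p\star},x^p)=D_{{\psi_\epsilon^p}^\star}(z^p,z^{p\star})$ (\cref{lem:prop_bregman}(ii)). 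Your additional discussion of the non-steep boundary case and of global existence is more careful than the paper's, but the core argument is identical.
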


\begin{remark} Expressing the Bregman divergence of $h$ in terms of the regularizers $\vartheta^p$, we have, $\textstyle D_h(x^\star, x_0) = \sum_{p \in \mathcal{N}} D_{{\vartheta^p}}({x^p}^\star, x^p_0)$, where $D_{\vartheta^p}$ is the Bregman divergence of $\vartheta^p$. From \eqref{eqn:MD_iterates_relative_strongly_monotone_Bregman} and \eqref{eqn:MD_iterates_relative_strongly_monotone}, observe that the rate of convergence increases exponentially upon one or more of the following parameter adjustments: the learning rate factor goes up ($\gamma \uparrow$), the game becomes more strongly monotone $(\eta \uparrow)$ or the regularization goes down $(\epsilon \downarrow)$, in which the mirror map \eqref{eqn:mirror_map_argmax_char} approximates a \textit{best response} function \cite{Mertikopoulos16}.
\end{remark}

\begin{remark} \label{remark:gap} From \eqref{eqn:MD_iterates_relative_strongly_monotone_Bregman} and \eqref{eqn:MD_iterates_relative_strongly_monotone}, the distance from the initial strategy $x_0$ to the interior NE $x^\star$ also affects the rate of convergence in an intuitive way. However, different choices of the relative function $h$ will result in different upper-bounds. For example, suppose $h(x) = \sum_{p \in \mathcal{N}} \frac{1}{2} \|x^p\|_2^2$, $x^p \in \Omega^p \subset \mathbb{R}^{n_p}$, \eqref{eqn:MD_iterates_relative_strongly_monotone} can be written as, 
	\begin{equation} 
		\label{eqn:euclidean_distance_to_NE}
		\textstyle \|x^\star -  x\|_2\! \leq \sqrt{e^{-\gamma\eta\epsilon^{-1} t}  \sum_{p \in \mathcal{N}} \|{x^p}^\star - x^p(0)\|_2^2},
	\end{equation}
	whereas when $h(x) = \sum_{p \in \mathcal{N}} {x^p}^\top \log(x^p), x^p \in \Updelta^{n_p},$
	\begin{equation} 
		\label{eqn:softmax_distance_to_NE}
		\textstyle \|x^\star -  x\|_2\! \leq \sqrt{2e^{-\gamma\eta\epsilon^{-1} t}  \sum_{p \in \mathcal{N}} {{x^p}^\star}^\top \ln({x^p}^\star/x^p(0)))},
	\end{equation} 
 	where the logarithm and division are performed component-wise. 	Note that since the upper-bound for MD \eqref{eqn:MD_iterates_relative_strongly_monotone} is derived by applying $D_h(x^\star, x) \geq \frac{\rho}{2}\| x^\star - x\|_2^2$ on the LHS of \eqref{eqn:euclidean_distance_to_NE}, therefore \eqref{eqn:MD_iterates_relative_strongly_monotone} could over-estimate the distance to the NE whenever the distance between $x(t)$ and $x^\star$ is measured in terms of the Euclidean distance as opposed to the Bregman divergence. This occurs whenever the relative function $h$ is not the Euclidean norm, e.g., \eqref{eqn:softmax_distance_to_NE}. Furthermore, these upper-bounds are valid point-wise starting from $t = 0$ as long as the entire trajectory $x(t)$ remains in $\rinterior(\Omega)$ for all $t \geq 0$, which always occurs for MD with $C^p_\epsilon$ induced by steep regularizers. When the mirror map $C^p_\epsilon$ is induced by a non-steep regularizer, such as the Euclidean projection, $x(t)$ could be forced to stay along the boundary of $\Omega$ in which case the upper-bounds hold asymptotically.
\end{remark}

\begin{remark} 
	As the game becomes null monotone ($\eta \to 0$) the rate \eqref{eqn:MD_iterates_relative_strongly_monotone_Bregman} worsens to $D_h(x^\star, x) \leq D_h(x^\star, x_0)$. We note that this inequality is an equality in any (network) zero-sum games with interior equilibria (which is a subset of potential-free, monotone games), as MD is known to admit periodic orbits starting from almost every $x_0 = x(0) = C_\epsilon(z(0))$. For details, see \cite{Mertikopoulos18}. In what follows, we partially overcome this non-convergence issue through \textit{discounting} as shown in \cite{Bo_LP_TechNote19, Bo_LP_TAC2020}. 
\end{remark}

\subsection{Discounted Mirror Descent Dynamics}
Consider the \textit{discounted} MD  studied in \cite{Bo_LP_TechNote19}, which in stacked notation is  DMD with rest points  \eqref{eqn:DMD_rest_points}, 

\noindent\begin{minipage}{.55\linewidth}
	\begin{equation}
		\hspace*{-0.5cm}
		\label{eqn:DMD_stacked}
		\begin{cases}
			\dot z & =  \gamma (-z + U(x)), \gamma > 0\\
			x &=  C_\epsilon(z),
		\end{cases}
		\tag{DMD}
	\end{equation} 
\end{minipage}%
\begin{minipage}{.45\linewidth}
	\begin{equation} 
		\label{eqn:DMD_rest_points} 
		\begin{cases}
			 (N_\Omega + (C_\epsilon)^{-1}) (\overline x) \ni U(\overline x), &\\
			 \overline x =  C_\epsilon(\overline z). &
		\end{cases}
	\end{equation}
\end{minipage}

The rest point, $\overline x = C_\epsilon(U(\overline x)) =  C_\epsilon\circ U(\overline x) $, is a \textit{perturbed} NE in the following sense.

\begin{lemma}(Proposition 5, \cite{Bo_LP_TechNote19}) \label{lem:equilibria}
	Any equilibrium of the form $\overline x = C_\epsilon(U(\overline x))$,  where $C_\epsilon = (C^p_\epsilon)_{p \in \mathcal{N}}$ is the mirror map induced by $\psi_\epsilon^p = \epsilon\vartheta^p, \epsilon > 0$, satisfying \cref{assump:primal}, is the NE of the game $\mathcal{G}$ with the perturbed payoffs, 
	\begin{equation} \label{eqn:perturbed_payoff} \widetilde{\mathcal{U}}^p(x^p; x^{-p}) = \mathcal{U}^p(x^p; x^{-p})  - \epsilon\vartheta^p(x^p).\end{equation} As $\epsilon \to 0$, $\overline x \to x^\star$, where $x^\star = ({x^p}^\star)_{p \in \mathcal{N}}$ is a NE of $\mathcal{G}$. 
\end{lemma}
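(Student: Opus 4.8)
The plan is to prove two things separately: the fixed-point characterization of $\overline x$, and the vanishing-perturbation limit. For the first, I would note that the perturbed game $\widetilde{\mathcal G}$ with payoffs \eqref{eqn:perturbed_payoff} has the same strategy sets $\{\Omega^p\}_{p\in\mathcal N}$ and, since each $\widetilde{\mathcal U}^p(\cdot;x^{-p})=\mathcal U^p(\cdot;x^{-p})-\epsilon\vartheta^p$ is the sum of a concave differentiable function and the concave function $-\epsilon\vartheta^p$, it is again a continuous concave game under \cref{assump:blanket} and \cref{assump:primal}. Fix a player $p$. By the defining \eqref{eqn:mirror_map_argmax_char}, the relation $\overline x^p = C^p_\epsilon(U^p(\overline x))$ says precisely that $\overline x^p$ maximizes $y^p\mapsto \langle y^p,U^p(\overline x)\rangle-\epsilon\vartheta^p(y^p)$ over $\Omega^p$; using the Fenchel / mirror-map identities recorded in the Appendix (namely $x^p = C^p_\epsilon(z^p)\Leftrightarrow z^p\in\partial\psi^p_\epsilon(x^p)$ with $\psi^p_\epsilon=\epsilon\vartheta^p$ and $\dom\psi^p_\epsilon=\Omega^p$), this is equivalent to $U^p(\overline x)\in\partial(\epsilon\vartheta^p+\iota_{\Omega^p})(\overline x^p)$. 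On the other hand, $\overline x$ is a Nash equilibrium of $\widetilde{\mathcal G}$ iff, for each $p$, $\overline x^p$ maximizes $\widetilde{\mathcal U}^p(\cdot;\overline x^{-p})$ over $\Omega^p$; since $\mathcal U^p(\cdot;\overline x^{-p})$ is differentiable on an open neighbourhood of $\Omega^p$, the subdifferential sum rule turns this into the first-order condition $0\in -U^p(\overline x)+\partial(\epsilon\vartheta^p+\iota_{\Omega^p})(\overline x^p)$ — exactly the same statement. Collecting over $p\in\mathcal N$ yields the claimed equivalence between $\overline x=C_\epsilon(U(\overline x))$ and $\overline x\in\NE(\widetilde{\mathcal G})$.

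For the limit, I would argue by compactness. Fix a sequence $\epsilon_k\downarrow 0$ and write $\overline x_k=\overline x(\epsilon_k)$. Since $\overline x^p_k$ maximizes $y^p\mapsto\langle y^p,U^p(\overline x_k)\rangle-\epsilon_k\vartheta^p(y^p)$ over $\Omega^p$, for any $y=(y^p)_{p\in\mathcal N}\in\Omega$ we obtain, after summing over $p$,
\[
\langle y-\overline x_k,\,U(\overline x_k)\rangle\ \le\ \epsilon_k\sum_{p\in\mathcal N}\big(\vartheta^p(y^p)-\vartheta^p(\overline x^p_k)\big)\ \le\ \epsilon_k\sum_{p\in\mathcal N}\big(\vartheta^p(y^p)-m^p\big),\qquad m^p:=\min_{\Omega^p}\vartheta^p,
\]
where each $m^p$ is finite because $\vartheta^p$ is closed, proper and convex on the compact set $\Omega^p$. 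Passing to a convergent subsequence $\overline x_k\to\hat x\in\Omega$ (possible since $\Omega$ is compact), the left-hand side tends to $\langle y-\hat x,U(\hat x)\rangle$ by continuity of $U$, while the right-hand side tends to $0$ since $\vartheta^p(y^p)<\infty$ for $y^p\in\Omega^p=\dom\vartheta^p$. Hence $\langle y-\hat x,U(\hat x)\rangle\le 0$ for every $y\in\Omega$, so by \eqref{eqn:nash_equilibrium_VI} $\hat x$ is a NE of $\mathcal G$. Thus every limit point of the family $\{\overline x(\epsilon)\}_{\epsilon>0}$ as $\epsilon\downarrow 0$ is a NE of $\mathcal G$; in particular $\overline x(\epsilon)\to x^\star$ whenever the NE is unique.

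The hard part is entirely in the first step: matching the optimality condition of the regularized argmax with the Nash condition of the perturbed game when $\vartheta^p$ is only strongly convex — hence generally non-differentiable, and possibly steep on the relative boundary — so that one must combine $\partial\vartheta^p$ with the normal cone $N_{\Omega^p}$ and apply the sum rule in the correct form. This is where the mirror-map machinery from the Appendix does the real work (single-valuedness of $C^p_\epsilon$, the conjugate-subgradient equivalence, and the fact that $\overline x^p\in\rinterior(\Omega^p)$ when $\vartheta^p$ is steep, which keeps all gradients well defined there). Once that equivalence is in hand, both the characterization and the $\epsilon\to 0$ limit are short, the only remaining routine point being the uniform lower bound $m^p$ on $\vartheta^p$ over $\Omega^p$.
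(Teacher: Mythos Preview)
Your proof is correct, but there is nothing in the present paper to compare it against: the lemma is quoted verbatim as Proposition~5 of \cite{Bo_LP_TechNote19} and is not re-proved here. The only commentary the paper offers (immediately after the statement) concerns existence and uniqueness of $\overline x$ --- existence via Kakutani applied to $C_\epsilon\circ U$, uniqueness via strong monotonicity of the perturbed pseudo-gradient --- neither of which is what the lemma actually asserts.

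On the substance of your argument: both parts are sound. For the first part, the key observation you make --- that the argmax defining $C^p_\epsilon$ and the Nash best-response problem for $\widetilde{\mathcal U}^p$ share the \emph{same} first-order optimality condition $U^p(\overline x)\in\partial(\epsilon\vartheta^p+\iota_{\Omega^p})(\overline x^p)$, and that concavity of both objectives makes this condition sufficient --- is exactly right and cleanly handles the non-differentiability of $\vartheta^p$. For the second part, your compactness-plus-VI argument is tight; the uniform lower bound $m^p$ via lower semicontinuity of $\vartheta^p$ on the compact $\Omega^p$ is the correct way to control the regularizer term, and the conclusion ``every cluster point is a NE, hence $\overline x(\epsilon)\to x^\star$ under uniqueness'' is the honest statement (the lemma's phrasing ``$\overline x\to x^\star$'' implicitly assumes this). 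One minor point worth making explicit: continuity of $U$ on $\Omega$, which you use when passing to the limit in $\langle y-\overline x_k,U(\overline x_k)\rangle$, follows from Assumption~2 (Lipschitzness) together with Assumption~1.
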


The existence and uniqueness of the perturbed NE was discussed in \cite[Proposition 5]{Bo_LP_TechNote19}. To summarize the remarks therein, the existence of the perturbed NE $\overline x = C_\epsilon \circ U (\overline x)$ amounts to an argument by Kakutani's fixed point theorem on the operator $C_\epsilon \circ U$. The uniqueness of $\overline x$ amounts to showing that the pseudo-gradient associated with the perturbed payoffs \eqref{eqn:perturbed_payoff} can be rendered strongly monotone due to the strong convexity assumption on $\vartheta^p$, which we show in the following proposition.

\begin{proposition}
	\label{prop:pseudo_gradient_rel_hypo_monotone} 
	Suppose $\mathcal{G}$ is $\mu$-hypo-monotone relative to $\textstyle h(x)\!=\!\sum_{p \in \mathcal{N}} \vartheta^p(x^p)$ with pseudo-gradient $U$. Let $\widetilde U = U - \Psi_\epsilon$,  $\Psi_\epsilon = (\nabla \psi_\epsilon^p)_{p \in \mathcal{N}}$, where $\psi_\epsilon^p = \epsilon \vartheta^p$ and the regularizer $\vartheta^p$ satisfies \cref{assump:primal}. Then $\widetilde U$ corresponds to the pseudo-gradient of the perturbed game $\widetilde{\mathcal{G}}$ with payoff function \eqref{eqn:perturbed_payoff} and $\widetilde{\mathcal{G}}$ is $(\epsilon - \mu)$-strongly monotone relative to $\textstyle h$ whenever $\epsilon> \mu$.
\end{proposition}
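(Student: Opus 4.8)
The plan is to verify the two claimed properties of $\widetilde U$ directly from the definitions. First I would check that $\widetilde U$ is indeed the pseudo-gradient of $\widetilde{\mathcal{G}}$: the payoff of player $p$ in $\widetilde{\mathcal{G}}$ is $\widetilde{\mathcal{U}}^p(x^p;x^{-p}) = \mathcal{U}^p(x^p;x^{-p}) - \epsilon\vartheta^p(x^p)$, which is still concave in $x^p$ (sum of a concave function and the concave $-\epsilon\vartheta^p$), so $\widetilde{\mathcal{G}}$ is a valid concave game; taking the partial gradient with respect to $x^p$ gives $\widetilde U^p(x) = U^p(x) - \epsilon\nabla\vartheta^p(x^p) = U^p(x) - \nabla\psi_\epsilon^p(x^p)$, and stacking over $p\in\mathcal{N}$ yields exactly $\widetilde U = U - \Psi_\epsilon$ as claimed. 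This step is essentially bookkeeping.

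For the monotonicity claim, I would start from the defining inequality of $(\epsilon-\mu)$-relative strong monotonicity and split $\widetilde U$ into its two pieces. Fix $x,x' \in \dom(\partial h)$ and compute
\begin{equation*}
(\widetilde U(x) - \widetilde U(x'))^\top (x - x') = (U(x) - U(x'))^\top (x - x') - (\Psi_\epsilon(x) - \Psi_\epsilon(x'))^\top (x - x').
\end{equation*}
The first term is bounded above by $\mu\bigl(D_h(x,x') + D_h(x',x)\bigr)$ by the hypothesis that $\mathcal{G}$ is $\mu$-relatively hypo-monotone with respect to $h$. For the second term, since $h(x) = \sum_{p\in\mathcal{N}}\vartheta^p(x^p)$ and $\Psi_\epsilon = (\nabla\psi_\epsilon^p)_{p\in\mathcal{N}} = (\epsilon\nabla\vartheta^p)_{p\in\mathcal{N}} = \epsilon\nabla h$, we have $(\Psi_\epsilon(x)-\Psi_\epsilon(x'))^\top(x-x') = \epsilon\,(\nabla h(x) - \nabla h(x'))^\top(x-x')$, and by the definition of the Bregman divergence this symmetrized bilinear form equals $\epsilon\bigl(D_h(x,x') + D_h(x',x)\bigr)$ exactly (add the two expansions $D_h(x,x') = h(x)-h(x')-\nabla h(x')^\top(x-x')$ and $D_h(x',x) = h(x')-h(x)-\nabla h(x)^\top(x'-x)$). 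Combining, the right-hand side is at most $(\mu - \epsilon)\bigl(D_h(x,x') + D_h(x',x)\bigr) = -(\epsilon-\mu)\bigl(D_h(x,x') + D_h(x',x)\bigr)$, which is precisely the statement that $\widetilde{\mathcal{G}}$ is $(\epsilon-\mu)$-relatively strongly monotone with respect to $h$, and the coefficient is positive exactly when $\epsilon > \mu$.

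The only mild subtlety — and the step I expect to need the most care — is the domain issue: the relative monotonicity definitions are stated on $\dom(\partial h) \subseteq \Omega$, and one should confirm that $\Psi_\epsilon$ (equivalently $\nabla h$) is well-defined there and that the Bregman-divergence identity for the symmetrized term holds on that set; this is immediate since $D_h$ is defined on $\dom(h)\times\dom(\partial h)$ and $h$ is differentiable over $\dom(\partial h)$ by \cref{assump:primal} applied termwise. Everything else is an exact algebraic identity plus one application of the hypothesis, so no inequality is lost beyond the single use of hypo-monotonicity; in particular the constant $\epsilon-\mu$ is tight, not merely a bound.
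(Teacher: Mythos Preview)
Your proposal is correct and follows essentially the same approach as the paper's proof: split $(\widetilde U(x)-\widetilde U(x'))^\top(x-x')$ into the $U$-part and the $\Psi_\epsilon$-part, bound the first by $\mu$-relative hypo-monotonicity, and evaluate the second exactly via the Bregman identity $(\nabla h(x)-\nabla h(x'))^\top(x-x') = D_h(x,x')+D_h(x',x)$ (the paper cites this as \cref{lem:prop_bregman}(iii)). Your additional remarks on verifying that $\widetilde U$ really is the pseudo-gradient of $\widetilde{\mathcal{G}}$ and on the domain of $\nabla h$ are reasonable bookkeeping that the paper leaves implicit.
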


Assuming  \cref{prop:pseudo_gradient_rel_hypo_monotone} holds, then $\widetilde{\mathcal{G}}$ is relatively strongly monotone as long as $\epsilon > \mu$, and the convergence rate for DMD follows that of MD in  \cref{thm:convergence_MD_relative_strongly_monotone}. The convergence of the strategies generated by DMD was shown in \cite{Bo_LP_TechNote19}; we supplement the results therein by providing the rate of convergence in $\mu$-relatively hypo-monotone and null monotone ($\mu = 0$) games.

\begin{corollary} 
	\label{thm:convergence_DMD_relative_hypo_monotone}
	Let $\mathcal{G}$ be $\mu$-relatively hypo-monotone with respect to  $\textstyle h(x) = \sum_{p \in \mathcal{N}} \!\vartheta^p(x^p)$, where  $x = (x^p(t))_{p \in \mathcal{N}} =  C_\epsilon(z(t))$ is the solution of DMD and $C_\epsilon = (C^p_\epsilon)_{p \in \mathcal{N}}$ is the mirror map induced by $\psi_\epsilon^p = \epsilon \vartheta^p$, where $\vartheta^p$ satisfies \cref{assump:primal}.  Suppose  $\overline x = (\overline x^p)_{p \in \mathcal{N}} \in \rinterior(\Omega) $  is the unique perturbed interior NE of $\mathcal{G}$ and let $D_{h}$ be the Bregman divergence of $h$. Then for any $\gamma,   \rho > 0, \epsilon > \mu$ and any $x_0 = x(0)\! = (x^p(0))_{p\in\mathcal{N}}\! =  C_\epsilon(z(0))$,
	$x(t)$ converges to $\overline x$ with the rate, 
	\begin{equation}
		\label{eqn:DMD_iterates_rel_hypomonotone_Bregman}
		\textstyle  D_h(\overline x, x) \leq   e^{-\gamma(\epsilon - \mu)\epsilon^{-1} t} D_h(\overline x, x_0).
	\end{equation} 	
	Furthermore, since $\vartheta^p$ is $\rho$-strongly convex, therefore,
	\begin{equation}
		\label{eqn:DMD_iterates_rel_hypomonotone}
		\hspace*{-0.3cm}
		\begin{split}
			\textstyle  \|\overline x -  x\|^2_2 \leq 2\rho^{-1} e^{-\gamma(\epsilon - \mu)\epsilon^{-1} t} D_h(\overline x, x_0).
		\end{split}
	\end{equation} 
	For $\mu = 0$ ($\mathcal{G}$ is null monotone), \eqref{eqn:DMD_iterates_rel_hypomonotone} implies,
	\begin{equation}
		\label{eqn:DMD_iterates_null_monotone}
		\hspace*{-0.3cm}
		\begin{split}
			\textstyle  \|\overline x -  x\|^2_2 \leq 2\rho^{-1} e^{-\gamma t} D_h(\overline x, x_0).
		\end{split}
	\end{equation} 
\end{corollary}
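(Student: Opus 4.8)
The plan is to recognize that \eqref{eqn:DMD_stacked} is nothing but \eqref{eqn:MD_stacked} run on the perturbed game $\widetilde{\mathcal{G}}$ of \cref{lem:equilibria}, and then to quote \cref{prop:pseudo_gradient_rel_hypo_monotone} together with \cref{thm:convergence_MD_relative_strongly_monotone}. So the first step is to rewrite the dynamics. Along any solution, $x(t) = C_\epsilon(z(t)) \in \rinterior(\Omega)$, and the first-order optimality condition for the argmax defining each $C^p_\epsilon$, read in the relative interior (per the paper's conventions on $\rinterior$-gradients), gives $z^p(t) = \nabla\psi_\epsilon^p(x^p(t))$, i.e. $z(t) = \Psi_\epsilon(x(t))$ with $\Psi_\epsilon = (\nabla\psi_\epsilon^p)_{p\in\mathcal{N}}$. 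Substituting this into $\dot z = \gamma(-z + U(x))$ yields $\dot z = \gamma\bigl(U(x) - \Psi_\epsilon(x)\bigr) = \gamma\,\widetilde U(x)$ with $\widetilde U = U - \Psi_\epsilon$, which is exactly MD with pseudo-gradient $\widetilde U$ and the unchanged mirror map $C_\epsilon$. By \cref{lem:equilibria}, $\widetilde U$ is the pseudo-gradient of the concave perturbed game $\widetilde{\mathcal{G}}$ with payoffs \eqref{eqn:perturbed_payoff}.

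Next I would identify the target point. Rearranging the rest-point condition \eqref{eqn:DMD_rest_points} as $N_\Omega(\overline x) \ni U(\overline x) - \Psi_\epsilon(\overline x) = \widetilde U(\overline x)$ shows, via the VI characterization \eqref{eqn:nash_equilibrium_VI}, that $\overline x$ is precisely an interior NE of $\widetilde{\mathcal{G}}$ — consistent with \cref{lem:equilibria}. Then I invoke \cref{prop:pseudo_gradient_rel_hypo_monotone}: since $\mathcal{G}$ is $\mu$-relatively hypo-monotone with respect to $h = \sum_{p\in\mathcal{N}}\vartheta^p$ and $\epsilon > \mu$, the perturbed game $\widetilde{\mathcal{G}}$ is $(\epsilon - \mu)$-relatively strongly monotone with respect to the same $h$, which in particular makes its interior NE $\overline x$ unique (matching the hypothesis of the corollary).

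With these pieces in place the rate is immediate: apply \cref{thm:convergence_MD_relative_strongly_monotone} to $\widetilde{\mathcal{G}}$ with the substitutions $\eta \leftarrow \epsilon - \mu$, pseudo-gradient $\widetilde U$, mirror map $C_\epsilon$, relative function $h$, and unique interior NE $\overline x$; this gives $D_h(\overline x, x) \leq e^{-\gamma(\epsilon-\mu)\epsilon^{-1}t}D_h(\overline x, x_0)$, which is \eqref{eqn:DMD_iterates_rel_hypomonotone_Bregman}. Since each $\vartheta^p$ is $\rho$-strongly convex, so is $h$, hence $D_h(\overline x, x) \geq \frac{\rho}{2}\|\overline x - x\|_2^2$; chaining this with the Bregman bound yields \eqref{eqn:DMD_iterates_rel_hypomonotone}, and the special case $\mu = 0$ (where the exponent collapses to $-\gamma t$) yields \eqref{eqn:DMD_iterates_null_monotone}.

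I expect the only genuinely delicate point to be the identification $z(t) = \Psi_\epsilon(x(t))$ used in the first step, which requires the trajectory $x(t)$ to remain in $\rinterior(\Omega)$ so that $z = \Psi_\epsilon(x)$ holds with equality rather than only as the inclusion $z \in \Psi_\epsilon(x) + N_\Omega(x)$. For steep regularizers this holds automatically for all $t \geq 0$; for non-steep regularizers (e.g. the Euclidean projection of \cref{examp:projection}) the trajectory may reach the relative boundary, and then — exactly as in the remarks following \cref{thm:convergence_MD_relative_strongly_monotone} — the estimate should be read as holding asymptotically. An alternative that sidesteps this entirely is to run the Lyapunov argument directly in the dual: take the Fenchel coupling $V(t) = \psi_\epsilon^\star(z) - \overline x^\top z + \psi_\epsilon(\overline x) = \epsilon D_h(\overline x, x)$, compute $\dot V = \gamma\,(x - \overline x)^\top(-z + U(x))$ using $C_\epsilon = \nabla\psi_\epsilon^\star$, substitute $U(\overline x) = \Psi_\epsilon(\overline x) = \epsilon\nabla h(\overline x)$ from the rest-point condition and $z = \epsilon\nabla h(x)$, invoke the identity $(x - \overline x)^\top(\nabla h(x) - \nabla h(\overline x)) = D_h(x,\overline x) + D_h(\overline x, x)$ together with $\mu$-relative hypo-monotonicity of $U$ to obtain $\dot V \leq \gamma(\mu - \epsilon)\bigl(D_h(x,\overline x)+D_h(\overline x,x)\bigr) \leq -\gamma(\epsilon - \mu)\epsilon^{-1}V$, and conclude by Gr\"onwall's inequality; this reproduces exactly the same constants.
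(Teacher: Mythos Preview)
Your strategy is the same as the paper's: rewrite DMD as an undiscounted dynamics driven by the perturbed pseudo-gradient $\widetilde U = U - \Psi_\epsilon$, invoke \cref{prop:pseudo_gradient_rel_hypo_monotone} to get $(\epsilon-\mu)$-relative strong monotonicity of $\widetilde{\mathcal{G}}$, and then repeat the MD Lyapunov calculation. Your alternative (Fenchel-coupling Lyapunov) is in fact exactly the paper's computation, since $\psi_\epsilon^\star(z)-\overline x^\top z+\psi_\epsilon(\overline x)=D_{\psi_\epsilon^\star}(z,\overline z)$ is the same $V_z$ the paper uses, up to the factor $\gamma^{-1}$.

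The one substantive difference is how the normal cone is handled in the rewriting. You set $z=\Psi_\epsilon(x)$ with equality and then quote \cref{thm:convergence_MD_relative_strongly_monotone} as a black box; as you note, this is only literally valid when $x(t)\in\rinterior(\Omega)$, so for non-steep regularizers your argument becomes conditional. The paper instead writes $z\in(C_\epsilon)^{-1}(x)=\Psi_\epsilon(x)+N_\Omega(x)$, obtaining $\dot z=\gamma\widetilde U(x)-\gamma n(x)$ for some $n(x)\in N_\Omega(x)$, and then carries the $n(x)$ term through the Lyapunov derivative, discarding it via the monotonicity of the normal cone $(x-\overline x)^\top(n(x)-n(\overline x))\ge 0$. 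This makes the argument go through without the steep/asymptotic caveat, at the cost of re-running the MD calculation rather than quoting it. Your direct alternative can be patched the same way: keep $z-\overline z=(\Psi_\epsilon(x)-\Psi_\epsilon(\overline x))+(n(x)-n(\overline x))$ and use normal-cone monotonicity to drop the second piece before applying \cref{lem:prop_bregman}(iii).
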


\begin{remark}
	We note that the condition, $\epsilon > \mu$, coincides with the known convergence condition of DMD in hypo-monotone games in \cite{Bo_LP_TAC2020}. This condition appears as $\epsilon > \mu \rho^{-1}$ in \cite{Bo_LP_TechNote19} due to $\rho$ arising from the standard (non-relative) notion of strong convexity used in the proofs therein. Hence whenever DMD converges in a $\mu$-hypo-monotone game (which by \cref{prop:U_strongly_vs_relative} is relatively hypo-monotone), it converges with the rate according to \cref{thm:convergence_DMD_relative_hypo_monotone}. 
	In addition, since any $\eta$-relatively strongly monotone function is $\mu$-hypo-monotone with $\mu = -\eta$, therefore DMD converges in the relatively strongly monotone regime with rate \eqref{eqn:DMD_iterates_rel_hypomonotone} where $\mu$ is replaced with $-\eta$, which implies an improved rate. Hence, compared with \eqref{eqn:MD_iterates_relative_strongly_monotone}, DMD tends to converge faster to nearly (or exactly) the same NE as MD for small $\epsilon$. 
\end{remark}

\section{Rate of Convergence in Games with Relatively Strongly Concave Potential}
\label{sec:relatively_strongly_concave} 

In this section, we consider the special case whereby the game admits a potential function. These games form a subset of the monotone games that we have discussed so far. Recall from \cref{sec:review_concepts}, a concave potential game satisfies the relationship $\nabla P = U$ where $P$ is some scalar-valued function, hence all of the definitions associated with monotone games can be rephrased in terms of $P$. For simplicity, in what follows, we assume that $\Omega$ has a non-empty interior.\footnote{In the case for which $\interior(\Omega) = \varnothing$, one could construct a \textit{full potential} game. For details, see \cite{Sandholm10}.}

 Following the convention from optimization, these definitions are usually stated as follows \cite{Beck17}:
\begin{definition}
	\label{def:potential_concave} 
	The potential function $P: \Omega \to \mathbb{R}$ is,
	\begin{itemize}
		\item[(i)] concave if $P(x)\! \leq \!P(x^\prime) + \!\nabla P(x^\prime)^\top (x-x^\prime), \forall x \in \Omega,  x^\prime\! \in \interior(\Omega),$
		\item[(ii)] $\eta$-strongly concave if,  $P(x) \leq P(x^\prime) + \nabla P(x^\prime)^\top (x-x^\prime) - \frac{\eta}{2}\|x-x^\prime\|_2^2, \forall x \in \Omega,  x^\prime\! \in \interior(\Omega),$ for some $\eta > 0$,
		\item[(iii)] $\mu$-weakly concave if, $P(x) \leq P(x^\prime) + \nabla P(x^\prime)^\top (x-x^\prime) + \frac{\mu}{2}\|x- x^\prime\|_2^2, \forall x \in \Omega,  x^\prime\! \in \interior(\Omega),$ for some $\mu > 0$.
	\end{itemize}
\end{definition}
In contrast to relative monotonicity, \textit{relative concavity} have been previously investigated in the optimization context \cite{Lu_Relative_SC, Xu18}. We provide a slightly extended version of relative strong concavity as compared to \cite{Lu_Relative_SC}. 
\begin{definition}
	\label{def:potential_relatively_concave}
	Suppose $h: \mathbb{R}^n \to \mathbb{R} \cup \{\infty\}$ is any differentiable convex function with domain $\dom(h) = \Omega$. The potential function $P$ is $\eta$-strongly concave relative to $h$, if for all $x \in \dom(\partial h), x^\prime \in \dom(\partial h)$, for some $\eta > 0$,
	\begin{equation} 		\label{eqn:potential_SC_Bregman}  P(x) \leq P(x^\prime) + \nabla P(x^\prime)^\top (x-x^\prime) - \eta D_{h}(x, x^\prime).
	\end{equation} 
\end{definition}
\begin{remark} 
Analogously, $P$ is $\mu$-weakly concave relative to $h$ for some $\mu > 0$ if for all $x \in  \dom(\partial h), x^\prime \in \dom(\partial h)$, $P(x) \leq P(x^\prime)\! +\! \nabla P(x^\prime)^\top\! (x-x^\prime)\! +\! \mu D_{h}(x, x^\prime).$ When $h$ is $\frac{1}{2} \|x\|^2_2$, \cref{def:potential_relatively_concave} implies \cref{def:potential_concave}(ii).
\end{remark} 
\begin{remark} \label{remark:1}  Note that  $P$ is $\eta$-strongly concave relative  to $h$ (or equivalently, \textit{$\eta$-relatively strongly concave}) if $P + \eta h$ is concave. Moreover, any $\eta$-strongly concave potential is $\eta \ell^{-1}$-relatively strongly concave with respect to any $\ell$-smooth $h$. Hence, to generate a game with a potential function that is strongly concave relative with respect to some function $h$, one can first generate a potential function that is standard strongly concave, then this game will be relatively strongly concave with respect to any relative function $h$ that is $\ell$-smooth. In the same vein, one can first generate a potential function that is standard weakly concave, then this potential will be relatively weakly concave with respect to any relative function $h$ that is $\rho$-strongly convex.
\end{remark}

\subsection{Actor-Critic Dynamics} Since all the potential games that we have discussed so far are also monotone games, hence our results in the previous section apply to MD as well regardless of whether the game possesses a potential function or not. Hereby we exclusively focus our attention on AC, which is only known to converge in potential games. Recall that the AC dynamics is given as \eqref{eqn:AC},
\begin{equation}
	\label{eqn:AC_stacked}
	\dot z  = \gamma  U(x), \gamma > 0, \quad 
	\dot x =  r (C_\epsilon(z) - x), r > 0,
	\tag{AC}
\end{equation}
We note that the rest points of AC are the same ones as those of MD \eqref{eqn:MD_rest_points}.  
 
\begin{theorem} \label{thm:convergence_AC_relative_sc}
	Let $\mathcal{G}$ be a potential game with $P$ $\eta$-strongly concave relative with respect to $\textstyle h(x)\! =\! \!\sum_{p \in  \mathcal{N}} \!\vartheta^p(x^p)$, where  $x \!=\! (x^p(t))_{p \in \mathcal{N}} \!\!=\!\! C_\epsilon(z(t))$ is the solution of AC,  $C_\epsilon \!=\! (C^p_\epsilon)_{p \in \mathcal{N}}$ is the mirror map induced by $\psi_\epsilon^p \!=\! \epsilon \vartheta^p$, and $\vartheta^p$ satisfies \cref{assump:primal}. Suppose $x^\star \!\!=\! ({x^p}^\star)_{p \in \mathcal{N}} \in \interior(\Omega)$ is the unique interior NE of $\mathcal{G}$ and let $D_{h}$ be the Bregman divergence of $h$. Then for any $r, \gamma,  \eta \! >\! 0, \epsilon  \!>\! \eta\gamma/r$ and any $x_0\!\! =\! \!(x^p(0))_{p\in\mathcal{N}}\! \!=  \!C_\epsilon(z(0))$, $z(0) \in \mathbb{R}^n$, \vspace{-0.2cm}
	\begin{equation}
		\hspace*{-0.2cm}
		\label{eqn:AC_potential_relative_sc}
		\textstyle P(x^\star) -  P(x)  \leq  e^{-\gamma \eta \epsilon^{-1}  t} (P(x^\star) - P(x_0) +r\epsilon\gamma^{-1}D_h(x^\star,x_0)),
	\end{equation}
	and $x(t)$ converges to $x^\star$ with the rate, \vspace{-0.2cm}
	\begin{equation}
		\label{eqn:thm_AC_iterates_relative_sc_bregman}
		\textstyle D_h(x, x^\star) \leq   \eta^{-1} e^{-\gamma \eta \epsilon^{-1}  t} (  P(x^\star)- P(x_0)  +r\epsilon\gamma^{-1}D_h(x^\star, x_0)).
	\end{equation} Furthermore, since $\vartheta^p$ is $\rho$-strongly convex, therefore,
	\begin{equation}
		\hspace*{-0.1cm}
		\label{eqn:thm_AC_iterates_relative_sc_euclidean}
		\textstyle \|x^\star - x\|^2_2 \leq  2(\rho \eta)^{-1} e^{-\gamma \eta \epsilon^{-1} t} (P(x^\star) - P(x_0)  + r\epsilon\gamma^{-1} D_h(x^\star, x_0)).
	\end{equation} 
\end{theorem}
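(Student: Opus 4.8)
\emph{Proof proposal.} The plan is to construct one scalar energy function that decays like $e^{-\beta t}$, with $\beta := \gamma\eta\epsilon^{-1}$, and read off the three displayed estimates from it. First I would use that $\mathcal{G}$ is a potential game, so $U = \nabla P$; since $x^\star$ is an \emph{interior} NE, \eqref{eqn:nash_equilibrium_VI} forces $\nabla P(x^\star) = U(x^\star) = 0$, so $x^\star$ is the unique global maximizer of the $\eta$-relatively strongly concave $P$, and taking $x' = x^\star$ in \eqref{eqn:potential_SC_Bregman} gives the elementary bound $\eta D_h(x, x^\star) \le P(x^\star) - P(x)$; together with $\tfrac{\rho}{2}\|x - x^\star\|_2^2 \le D_h(x, x^\star)$ (since $h = \sum_p \vartheta^p$ is $\rho$-strongly convex), this will convert \eqref{eqn:AC_potential_relative_sc} into \eqref{eqn:thm_AC_iterates_relative_sc_bregman} and \eqref{eqn:thm_AC_iterates_relative_sc_euclidean}. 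Writing $\psi_\epsilon := \epsilon h$ so that $C_\epsilon = \nabla\psi_\epsilon^\star$, I fix a dual point $z^\star$ with $C_\epsilon(z^\star) = x^\star$ and propose the energy $E(t) = \bigl(P(x^\star) - P(x(t))\bigr) + \tfrac{r}{\gamma}\,D_{\psi_\epsilon^\star}\bigl(z(t), z^\star\bigr)$. By the duality between the Bregman divergence of a convex function and that of its conjugate, $D_{\psi_\epsilon^\star}(z, z^\star) = D_{\psi_\epsilon}(x^\star, C_\epsilon(z)) = \epsilon D_h(x^\star, C_\epsilon(z)) \ge 0$, so both terms of $E$ are nonnegative, and since the initial data satisfy $x_0 = C_\epsilon(z(0))$ one has $E(0) = P(x^\star) - P(x_0) + r\epsilon\gamma^{-1}D_h(x^\star, x_0)$, which is precisely the bracketed quantity on the right-hand side of \eqref{eqn:AC_potential_relative_sc}.

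The second step is to differentiate $E$ along \eqref{eqn:AC_stacked}. From $\tfrac{d}{dt}P(x) = \nabla P(x)^\top\dot x = r\,\nabla P(x)^\top(C_\epsilon(z) - x)$ and $\tfrac{d}{dt}D_{\psi_\epsilon^\star}(z, z^\star) = (\nabla\psi_\epsilon^\star(z) - x^\star)^\top\dot z = \gamma\,(C_\epsilon(z) - x^\star)^\top\nabla P(x)$ (using $\nabla\psi_\epsilon^\star = C_\epsilon$ and $\nabla\psi_\epsilon^\star(z^\star) = x^\star$), the two contributions proportional to $\nabla P(x)^\top(C_\epsilon(z) - x)$ cancel — which is exactly why the weight $r/\gamma$ is used — leaving $\dot E = r\,\nabla P(x)^\top(x - x^\star)$. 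Applying $\eta$-relative strong concavity of $P$ with base point $x$ and target $x^\star$, i.e.\ $P(x^\star) \le P(x) + \nabla P(x)^\top(x^\star - x) - \eta D_h(x^\star, x)$, yields $\nabla P(x)^\top(x - x^\star) \le -(P(x^\star) - P(x)) - \eta D_h(x^\star, x)$, hence $\dot E \le -r\bigl(P(x^\star) - P(x)\bigr) - r\eta\,D_h(x^\star, x)$. (Here one also records that $x(t)$ stays in $\dom(\partial h)$, since it is a convex combination of $x_0$ and the values $C_\epsilon(z(s))$, all of which lie in $\dom(\partial h)$.)

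The main obstacle is upgrading this to the differential inequality $\dot E \le -\beta E$: the decay estimate yields a Bregman term evaluated at the \emph{primal state} $x$, while $E$ carries one at $C_\epsilon(z)$, and these must be reconciled. This is exactly where the hypothesis $\epsilon > \eta\gamma/r$ enters — it is equivalent to $\beta < r$ and gives $\beta\cdot r\epsilon\gamma^{-1} = r\eta$ — so, splitting $-r(P(x^\star)-P(x)) = -\beta(P(x^\star)-P(x)) - (r-\beta)(P(x^\star)-P(x))$, it remains to prove $(r-\beta)\bigl(P(x^\star) - P(x)\bigr) + r\eta\bigl(D_h(x^\star, x) - D_h(x^\star, C_\epsilon(z))\bigr) \ge 0$. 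I would expand the Bregman difference with the three-point identity $D_h(x^\star, x) = D_h(x^\star, C_\epsilon(z)) + D_h(C_\epsilon(z), x) + \bigl(\nabla h(C_\epsilon(z)) - \nabla h(x)\bigr)^\top(x^\star - C_\epsilon(z))$, substitute $\nabla h(C_\epsilon(z)) = \epsilon^{-1}z$, and absorb the residual cross term into the slack $(r-\beta)(P(x^\star)-P(x)) \ge (r-\beta)\eta\,D_h(x, x^\star)$ furnished by Step 1 (using, if needed, $L$-Lipschitzness of $U = \nabla P$ and $\rho$-strong convexity of $h$); I expect this bookkeeping to be the delicate part. Once $\dot E \le -\beta E$ is in hand, Gr\"onwall's inequality gives $E(t) \le e^{-\beta t}E(0)$, and then \eqref{eqn:AC_potential_relative_sc} follows from $P(x^\star) - P(x) \le E(t)$, while \eqref{eqn:thm_AC_iterates_relative_sc_bregman} and \eqref{eqn:thm_AC_iterates_relative_sc_euclidean} follow by combining it with the two elementary bounds from Step 1.
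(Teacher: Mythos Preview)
Your Lyapunov candidate $E$ is (up to the constant factor $\epsilon r^{-1}$ and the built-in exponential) exactly the paper's
$V_{x,z}(t)=\epsilon e^{\beta t}\bigl(r^{-1}(P(x^\star)-P(x))+\gamma^{-1}\sum_p D_{{\psi_\epsilon^p}^\star}(z^p,{z^p}^\star)\bigr)$;
your derivative computation $\dot E=r\,\nabla P(x)^\top(x-x^\star)$ and the subsequent application of $\eta$-relative strong concavity match the paper line for line, as does the extraction of \eqref{eqn:thm_AC_iterates_relative_sc_bregman} and \eqref{eqn:thm_AC_iterates_relative_sc_euclidean} from \eqref{eqn:AC_potential_relative_sc} via $P(x^\star)-P(x)\ge\eta D_h(x,x^\star)\ge\tfrac{\rho\eta}{2}\|x-x^\star\|_2^2$.

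The only divergence is at what you call the ``main obstacle.'' The paper does \emph{not} carry out any three-point-identity bookkeeping to reconcile $D_h(x^\star,x)$ with $D_h(x^\star,C_\epsilon(z))$. It simply invokes the conjugate duality (\cref{lem:prop_bregman}(ii)) in the form $D_{{\psi_\epsilon^p}^\star}(z^p,{z^p}^\star)=D_{\psi_\epsilon^p}({x^p}^\star,x^p)$ with the primal state $x^p$ itself in the second slot---consistent with the theorem's stated hypothesis ``$x=(x^p(t))_{p\in\mathcal N}=C_\epsilon(z(t))$''---so the two Bregman contributions cancel \emph{exactly} and one is left with
$\dot V_{x,z}=e^{\beta t}\bigl(\epsilon-\tfrac{\eta\gamma}{r}\bigr)(P(x)-P(x^\star))<0$.
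From $V_{x,z}(t)\le V_{x,z}(0)$ the three displayed bounds follow immediately; there is no residual cross term, no appeal to $L$-Lipschitzness of $U$, and the condition $\epsilon>\eta\gamma/r$ enters only to fix the sign of that final scalar factor. In short, the paper sidesteps the very difficulty you flag by reading $x=C_\epsilon(z)$ throughout, so your Step-3 programme is not needed to reproduce its argument---though your caution about the distinction between $x(t)$ and $C_\epsilon(z(t))$ along a genuine AC trajectory (where $\dot x=r(C_\epsilon(z)-x)$) is well placed.
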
 

\section{Case Studies} 
\label{sec:case_studies} 
We present several case studies for monotone games, whereby the games do not admit potentials, and demonstrate the validity of these upper-bounds. We note that since several examples involving the rates of AC in potential games were considered in \cite{Bo_LP_CDC2020}, thus we do not consider them here. In the following examples, we begin by considering the strongly monotone case where both MD and DMD converge. Next, we consider a null monotone game, followed by a hypo-monotone game, where MD ceases to converge but DMD still converges, see \cite{Bo_LP_TechNote19, Bo_LP_TAC2020}.

\begin{example}(\textbf{Adversarial Attack On An Entire Dataset})
	\label{example:adversarial_perturbation} Consider a single dataset $\mathpzc{D} = \{(a_n,b_n)\}_{n = 1}^N$, with examples $a_n \in \mathbb{R}^d$ and associated labels $b_n \in \mathbb{Z}$, and a \textit{trained} model $\mathpzc{M}: \mathbb{R}^d \to \mathbb{Z}$, $\mathpzc{M}(a_n) = b_n, \forall n$. We assume that an attacker wishes to produce a single perturbation $\iota \in \mathcal{I} \subseteq \mathbb{R}^n$, such that when it is added to every examples, each of the new examples $\widehat{a}_n \coloneqq a_n + \iota$ \textit{potentially} causes $\mathpzc{M}$ to misclassify, all the while the difference $\widehat{a}$ and $a_n$ remains small, i.e., the attacker also wishes for the perturbation to be imperceptible. This is a weaker version of an ``universal perturbation", where every perturbed example causes the model to misclassify \cite{Zhang19}. 
	
	An approach that induces a convex-concave saddle point problem is as follows:  first, construct a set of new labels (``targets")  $\widehat b_n$ which may be derived from the true labels $b_n$. Next, minimize the distance between the prediction on $\widehat{a}_n$ with its associated $\widehat {b}_n$ by calculating $\iota$ against the worst-case convex combination of the convex loss functions. This routine can be formulated as,
	\begin{equation}
		\textstyle  \underset{\iota \in \mathcal{I}}{\text{min}} \thinspace \underset{\mathpzc{p} \in\Updelta^N}{\text{max}} \thinspace  \mathpzc{p}^\top \mathpzc{F}(\iota; a_n, \widehat{b}_n, w) - \mathpzc{R}(\mathpzc{p}),
		\label{eqn:universal_adversarial_example_optimization}
	\end{equation} 	
	where $\mathpzc{R}$ is a convex regularizer, $\mathpzc{F} = (\mathpzc{L}^n)_{n=1}^N$ is a stacked-vector whereby each $\mathpzc{L}^n$ is a (per-sample) loss function  that models the distance between the prediction on the $n^\text{th}$ \textit{perturbed} example and the $n^\text{th}$ \textit{perturbed} label, $\mathpzc{p} \in \Updelta^{N}$ describes the worst-case convex combination of the loss functions, $w \in \mathbb{R}^{d+1}$ is the weight of the  model $\mathpzc{M}$. 
	
	Consider a trained logistic regression model, $\mathpzc{M}(w) = \varphi(w^\top a_n)$, where $\varphi: \mathbb{R}\to (0,1)$ is the logistic function, $\varphi(x) = \exp(x)(1+\exp(x))^{-1}$ (for background, see \cite[p. 246]{Murphy}), hence each of the convex loss function in the stacked-vector $\mathpzc{F}$ is of the form $\mathpzc{L}^n(\iota; a_n, \widehat b_n, w) = \log(1 +\exp(-\widehat b_n (w_0 + w_1 (a_n+\iota))))$, $w = (w_0, w_1) \in \mathbb{R}^2$. Suppose $a_n \in \mathbb{R}$, and $b_n \in \{-1, +1\}$, the new/adversarial targets $\widehat b_n \in \{+1, -1\}$ is obtained by flipping each $b_n$. Let $ x^1  = \iota \in [-1, 1]$, $x^2 = \mathpzc{p}  \in \Updelta^N$, and $\mathpzc{R}(x^2) = \frac{\mathpzc{r}}{2}\|x^2 - \mathbf{1} /N\|_2^2$, $\mathpzc{r} > 0$,  then \eqref{eqn:universal_adversarial_example_optimization} is equal to,
	\begin{equation}
		\textstyle \underset{x^1  \in [-1,1]}{\text{min}} \thinspace \underset{x^2 \in \Updelta^N}{\text{max}} \thinspace {x^2}^\top \mathpzc{F}(x^1; a_n, \widehat{b}_n, w) - \frac{\mathpzc{r}}{2}\|x^2 - \mathbf{1} /N\|_2^2,
	\end{equation}  
	which is equivalent to a two-player concave zero-sum game with payoff functions, 
	\begin{equation} 
		\textstyle  \mathcal{U}^1(x^1; x^2) = -{x^2}^\top \mathpzc{F}(x^1; a_n, \widehat{b}_n, w) + \frac{\mathpzc{r}}{2}\|x^2 - \mathbf{1} /N\|_2^2,
	\end{equation} 
	and $\mathcal{U}^2(x^1; x^2) = -\mathcal{U}^1(x^1; x^2).$ The pseudo-gradient of the game is,
	\begin{equation}
		\textstyle 	U(x) = \begin{bmatrix} \sum_{n = 1}^N  x^2_n \widehat b_n w_1  \varphi(-\widehat{b}_n (w_0 + w_1 (a_n + x^1)))   \\ \mathpzc{F}(x^1; a_n, \hat{b}_n, w) - \mathpzc{r}(x^2 - \mathbf{1} /N)\end{bmatrix},
	\end{equation}
	The Jacobian of $U$ is,
	\begin{equation}
		\mathbf{J}_{U}(x) = \begin{bmatrix}  
			\star & \mathpzc{m}^1 & \ldots  & \mathpzc{m}^N \\ 
			-\mathpzc{m}^1 & -\mathpzc{r} & \ldots & 0 \\ 
			\vdots & \vdots &  \ddots & \vdots \\ 
			-\mathpzc{m}^N & 0  & \ldots & -\mathpzc{r} \end{bmatrix},
	\end{equation}
	where each $\mathpzc{m}^n \!= \widehat b_n w_1  \varphi(-\widehat{b}_n w_0 -\widehat{b}_n w_1 (a_n + x^1))$, $\star = -\sum_{n = 1}^N x^2_n (\widehat{b}_n w_1)^2 \varphi(-\widehat{b}_n w_0  -\widehat{b}_n w_1 (a_n  + x^1))(1- \varphi(-\widehat{b}_n w_0 -\widehat{b}_n w_1 (a_n + x^1))$. Which means $\mathcal{G}$ is $\eta$-strongly monotone for $\eta =  |\max(\star, -\mathpzc{r})|, \forall x^1 \in [-1,1], x^2 \in \Updelta^N$. By \cref{prop:U_strongly_vs_relative}(i), $\mathcal{G}$ is $\mu$-relatively strongly monotone with respect to $h(x) = \frac{1}{2}\|x\|_2^2, \forall x \in [-1,+1] \times \Updelta^N$.
	
	\begin{figure}[htp!]
		\vspace{-0.3cm}
		\centering
		\includegraphics[draft = false, width= 0.5\linewidth]{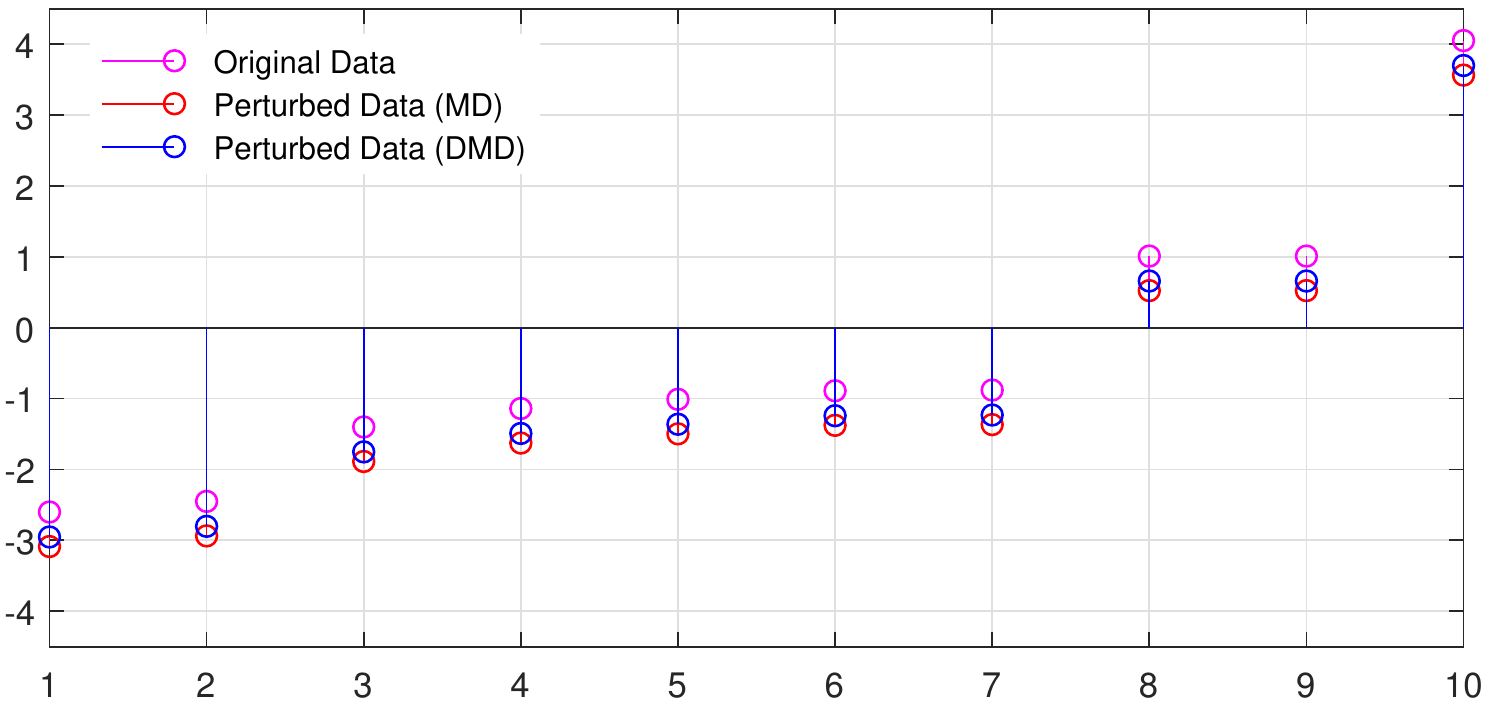}
		\caption{\textit{Original data versus perturbed data in the adversarial attack example.}}
		\label{Fig:Adversarial_Attack_Data}
		\vspace{-0.1cm}
	\end{figure}
	We consider a linearly separable dataset $\mathpzc{D}$ with $N = 10$ examples generated according a Gaussian distribution $\mathpzc{N}(0, 4)$, sorted from smallest to largest, with $-1$ labels generated for examples with larger magnitudes, and $+1$ otherwise (see \cref{Fig:Adversarial_Attack_Data}), where the trained classifier $\mathpzc{M}$ has weights $w = (0.8484, 0.8947)$. We set $\mathpzc{r} = 10, \gamma = 1, \epsilon = 0.1, x^p_0 = C_\epsilon(z^p(0)), z^1(0) = 1, z^2(0) = \mathbf{1}/10$,   MD converge to approximately ${x^1}^\star = -0.487, {x^2}^\star = (0.11,0.11,0.09,0.09,0.09,0.08,0.08,0.10,0.10,0.15)$ while DMD converge to $\overline x^1 = -0.352$ and $\overline x^2 = (0.23,0.12,0.10,0.03,0.02,0.01,0.01,0.09,0.14,0.27)$ when rounded. 
	In both cases, the resulting perturbations $\iota$ manages to fool $\mathpzc{M}$ simultaneously on two of the examples ($6$th and $7$th example in \cref{Fig:Adversarial_Attack_Data}). The perturbation calculated by DMD is smaller, thus better meets the requirement that the change should be imperceptible. The trajectories of MD (dotted) and DMD (solid) are shown in \cref{Fig:Adversarial_Attack_Convergence}. The location of the true NE of the game is indicated by green stars.

	The strong monotonicity parameter is  the max amongst	$\{-(\widehat{b}_n w_1)^2 \varphi(-\widehat{b}_n w_0  -\widehat{b}_n w_1 (a_n  + x^1))(1- \varphi(-\widehat{b}_n w_0 -\widehat{b}_n w_1 (a_n + x^1))\} \cup \{-\mathpzc{r}\}$ over $x^1 \in [-1, 1]$. Using grid-search we find that $\eta =  0.0037$, which occurs at $x^1 = 1$ and the largest $(a_n,b_n)$ pair. The comparison of the rate between MD (red) and DMD (blue) is shown in \cref{Fig:Adversarial_Attack_MD_vs_DMD_rate}. While the upper-bound initially under-estimates the true trajectory due to the projection operator (see \cref{remark:gap}), it provides a reasonable asymptotic description. 
	
	\begin{figure}[ht]
		\centering
		\begin{minipage}[ht]{0.45\columnwidth}
			\hspace{-0.2cm}	\centerline{\includegraphics[draft = false, width=1.1\linewidth]{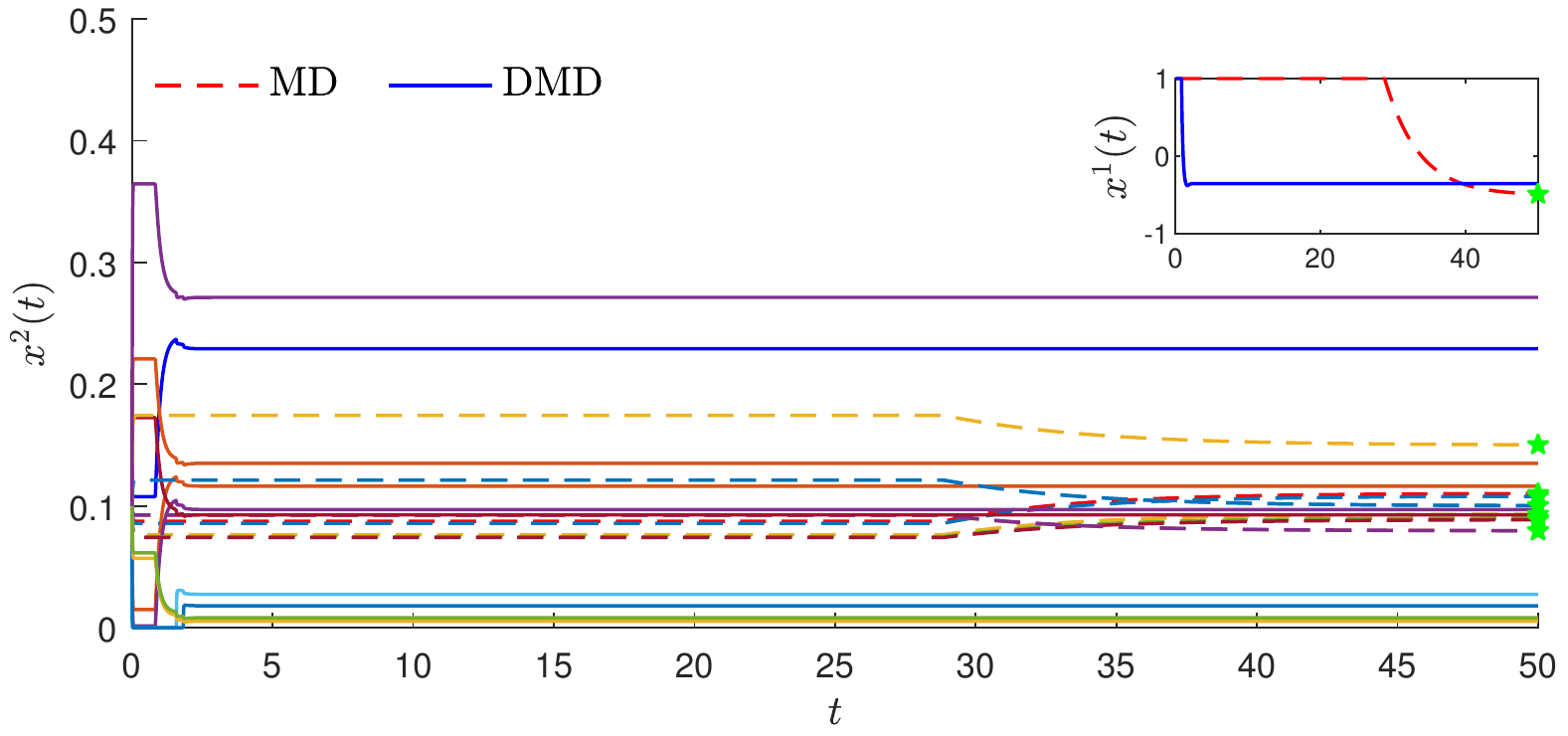}}
			\caption{Convergence of trajectories in the targeted adversarial attack example. MD is shown as dotted curves. DMD is shown as solid curves.}  
			\label{Fig:Adversarial_Attack_Convergence}
		\end{minipage}
		\hspace{0.2cm}
		\begin{minipage}[ht]{0.45\columnwidth}
			\vspace{-0.1cm}	\centerline{\includegraphics[draft = false, width=1.1\linewidth]{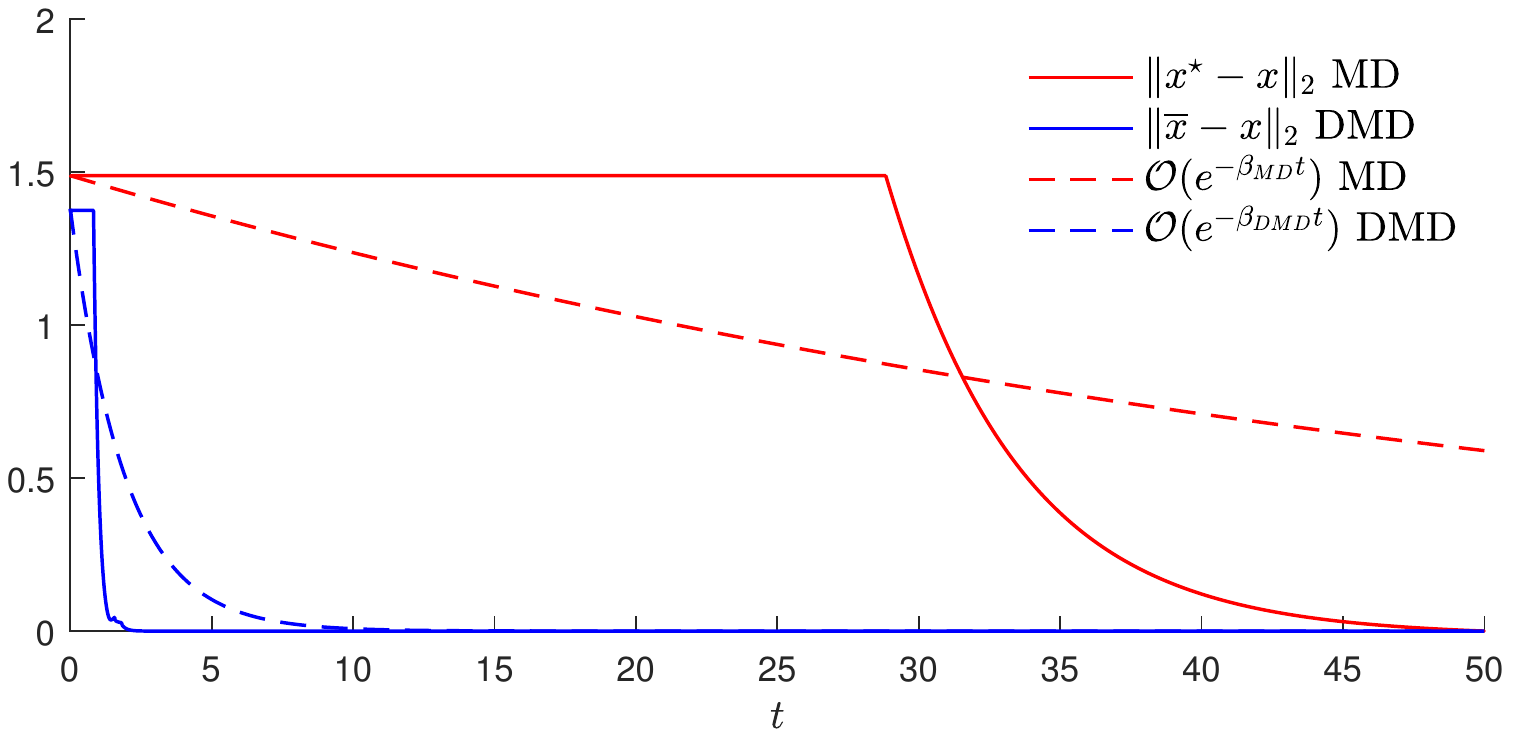}}
			\caption{Comparison between the rate of convergence of MD and DMD for the adversarial attack example. Note $\beta_\text{MD} = 0.0037, \beta_\text{DMD} = 1+\beta_\text{MD}$} 
			\label{Fig:Adversarial_Attack_MD_vs_DMD_rate}
		\end{minipage}
	\vspace{-0.5cm}
	\end{figure}
\end{example}

The next two examples are in the setting of finite (mixed) games. Recall that a finite game is the triple $\mathcal{G} = (\mathcal{N}, (\mathcal{A}^p)_{p \in \mathcal{N}} , (\mathcal{U}^p)_{p \in \mathcal{N}})$  where each player $p \in \mathcal{N}$ has a finite set of  strategies $\mathcal{A}^p = \{1, \ldots, n^p\}, n^p \geq 1$ and a payoff function $\mathcal{U}^p: \mathcal{A} = \prod_{p \in \mathcal{N}} \mathcal{A}^p \to \mathbb{R}$, where $\mathcal{U}^p(i) = \mathcal{U}^p(i^1, \ldots, i^p, \ldots,  i^N)$ denotes the payoff for the $p$th player when each player chooses a strategy $i^p \in \mathcal{A}^p$. Then the mixed-extension of the game (\textit{mixed game}) is also denoted by $\mathcal{G} = (\mathcal{N}, (\Updelta^p)_{p \in \mathcal{N}} , (\mathcal{U}^p)_{p \in \mathcal{N}})$, where $\Updelta^p \coloneqq \Updelta^{n_p} =  \{x^p \in \mathbb{R}^{n_p}_{\geq 0}|\|x^p\|_1 = 1\}$ is the set of mixed strategies for player $p$, and each player's expected payoff is $\mathcal{U}^p: \Updelta =  \prod_{p \in \mathcal{N}} \Updelta^p\to \mathbb{R}$, 	$\mathcal{U}^p(x) = \sum_{i \in \mathcal{A}} \mathcal{U}^p(i) \prod_{p \in \mathcal{N}} x^p_{i^p} = \sum_{i \in \mathcal{A}^p} {x_i^p} U^p_i(x) = {x^p}^\top U^p(x)$, where $U^p_i(x) = \mathcal{U}^p(i; x^{-p})$ and $U^p = (U^p_i)_{i \in \mathcal{A}^p}$ is referred to as the player $p$'s payoff vector. The (overall) payoff vector $U = (U^p)_{p \in \mathcal{N}}$ is equivalent to the the pseudo-gradient of the mixed-game.

We simulate each of the following examples using DMD with two mirror maps: the Euclidean projection onto the simplex and the softmax. We then compare their distances to the NE along with their theoretical upper-bounds. Since MD do not converge in these examples (see \cite{Bo_LP_TechNote19, Bo_LP_TAC2020, Mertikopoulos18}) therefore we do not consider it. 

\begin{example}[\textbf{Three-Players Network Zero-Sum Game}] 	Consider a network represented by a finite, fully connected, undirected graph $\mathpzc{G} = (\mathpzc{V}, \mathpzc{E})$ where $\mathpzc{V}$ is the set of vertices  (players) and $\mathpzc{E} \subset \mathpzc{V} \times \mathpzc{V}$ is the set of edges which models their interactions. 
	Given two vertices (players)
	$p,q \in \mathpzc{V}$,  
	we assume that there is a zero-sum game on the edge 
	$(p,q)$ given by the payoff matrices $(\mathpzc{A}^{p,q}, \mathpzc{A}^{q,p})$, whereby $\mathpzc{A}^{q,p} = -{\mathpzc{A}^{p,q}}$. Assume that $N=3$ players, where each pair plays a Matching Pennies (MP) game, \vspace{-0.2cm}\begin{equation}
		\label{eqn:network_mp_game}
		\mathpzc{A}(k) = \begin{bmatrix*}[r] +k & -k \\ -k & +k \end{bmatrix*},
	\end{equation}
	with the NE  $x^\star = ({x^p}^\star)_{p \in \mathcal{N}}, {x^p}^\star = (1/2, 1/2)$. The perturbed NE $\overline x$ coincides with the true NE in this game \cite{Bo_LP_TAC2020}. Let the payoff matrices for each edge be given as,
	\[ \begin{array}{lll}%
		\mathpzc{A}^{1,2} = \mathpzc{A}(1) & \mathpzc{A}^{1,3} = \mathpzc{A}(2) & \mathpzc{A}^{2,3} = \mathpzc{A}(3)\\   
		\mathpzc{A}^{2,1} = -{\mathpzc{A}^{1,2}}  &  \mathpzc{A}^{3,1} = -{\mathpzc{A}^{1,3}}   &  \mathpzc{A}^{3,2} = -{\mathpzc{A}^{2,3}}
	\end{array}\]
	Since each pair-wise interaction between players is a zero-sum game, the pseudo-gradient (payoff vector) of the overall player set is given by, \vspace{-0.2cm}
	$$U(x) =  \begin{bmatrix} U^1(x) \\ U^2(x)\\U^3(x) \end{bmatrix} =  \begin{bmatrix} 0 &  \mathpzc{A}^{1,2} & \mathpzc{A}^{1,3}\\ -{\mathpzc{A}^{1,2}}^\top  & 0 & \mathpzc{A}^{2,3} \\ -{\mathpzc{A}^{1,3}}^\top  & -{\mathpzc{A}^{2,3}}^\top  & 0 \end{bmatrix}\begin{bmatrix}\vphantom{-{\mathpzc{A}^{1,2}}^\top } x^1 \\ x^2 \\ x^3  \vphantom{-{\mathpzc{A}^{1,2}}^\top }\end{bmatrix}
	$$
	or $U(x) = \Phi x$, where $\Phi+\Phi^\top=0$, i.e., the game is null monotone game ($\mu = 0$). 
	
	We simulate DMD with player parameters set to be $\gamma = 1, \epsilon = 1$, and initial condition $x^p_0 =  C_\epsilon(z^p(0)), z^p(0) = (1,2)$. We plot the distances to the NE  along with their upper-bounds in \cref{Fig:Network_MP_Convergence_MD}. Since the entire solution for either DMD with softmax or Euclidean projection stays in the inteiror of the simplex, therefore by \cref{remark:gap}, these upper-bounds are valid for all $t \geq 0$. 
\end{example}
\begin{figure}[ht]
	\centering
	\begin{minipage}[ht]{0.45\columnwidth}
		\hspace{-0.2cm}	\centerline{\includegraphics[draft = false, width=1.1\linewidth]{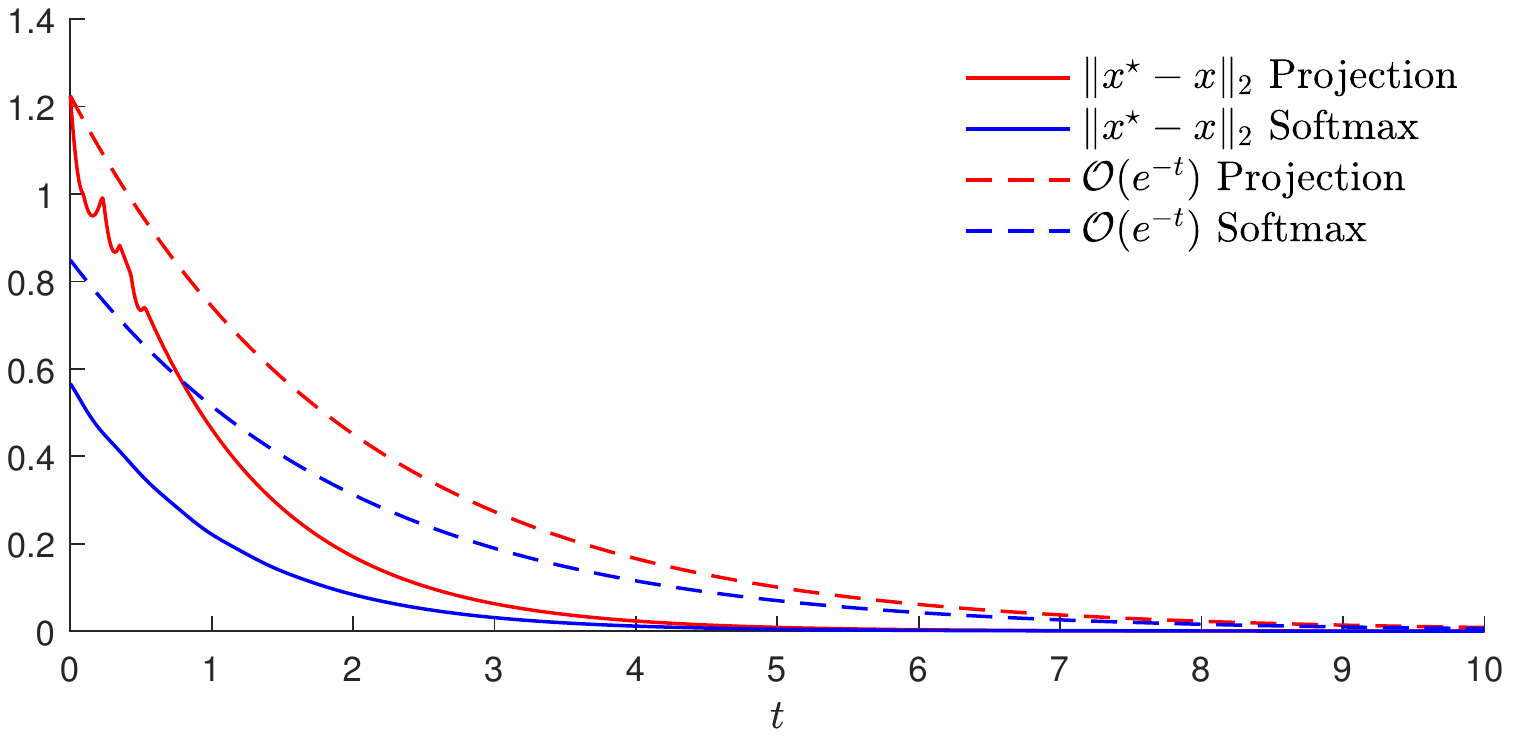}}
		\caption{Comparison between the rate of convergence of DMD with Euclidean projection versus with softmax in Network Zero-Sum Matching Pennies game.}  
		\label{Fig:Network_MP_Convergence_MD}
	\end{minipage}
	\hspace{0.2cm}
	\begin{minipage}[ht]{0.45\columnwidth}
		\vspace{-0.1cm}	\centerline{\includegraphics[draft = false,, width=1.1\linewidth]{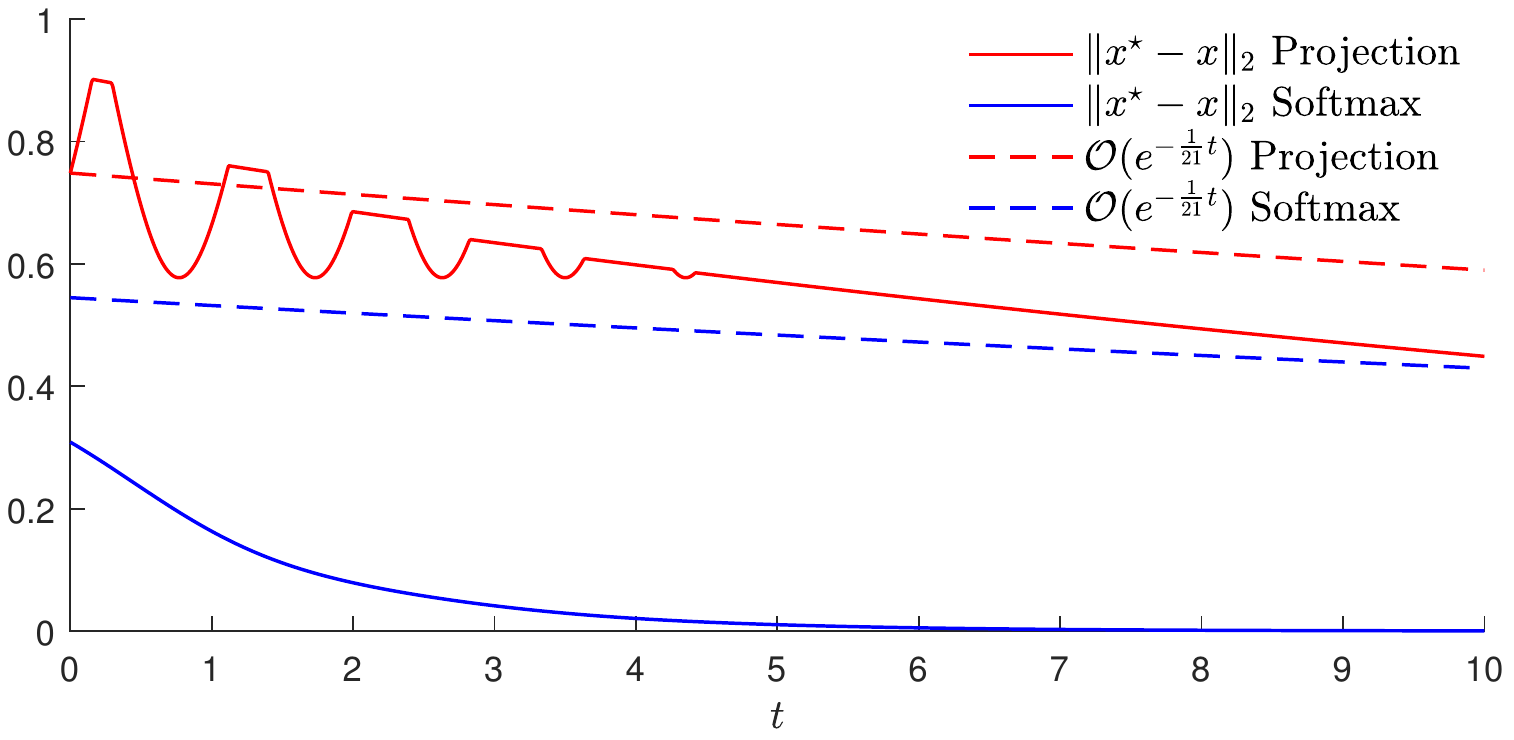}}
		\caption{Comparison between the rate of convergence of DMD with Euclidean projection versus with softmax in the two-players RPS game example.} 
		\label{Fig:RPS_convergence_rates}
	\end{minipage}
	\vspace{-0.5cm}
\end{figure}
\begin{example}[\textbf{Two-Player Rock-Paper-Scissors (RPS)}]\!\!	Consider a two-player RPS game with $\mathpzc{A}$ and $\mathpzc{B}$ being the payoff matrices for player $1$ and $2$,
	\begin{equation} 
		\mathpzc{A} = \begin{bmatrix} 0 & -\mathpzc{l} &\mathpzc{w} \\ \mathpzc{w} & 0 & -\mathpzc{l} \\ -\mathpzc{l} & \mathpzc{w} & 0 \end{bmatrix}, \quad \mathpzc{B} = \mathpzc{A}^\top,
	\end{equation} 
	where $\mathpzc{l}, \mathpzc{w} > 0$ are the values associated with a loss or a win. For this game, the pseudo-gradient (or payoff vector) is,
	\begin{equation}
		\label{eqn:shapley}
		U(x) = \begin{bmatrix} 0 & \mathpzc{A} \\ \mathpzc{B}^\top & 0 \end{bmatrix} \begin{bmatrix} x^1\\ x^2 \end{bmatrix} = \begin{bmatrix}  \mathpzc{A}  x^1 \\   \mathpzc{A} x^2 \end{bmatrix}.
	\end{equation}
	Following the arguments in \cite{Bo_LP_TAC2020}, it can be shown that $\mathcal{G}$ is $\mu$-hypo-monotone with $\mu = \frac{1}{2}|\mathpzc{l} - \mathpzc{w}|$, for all $\mathpzc{l} \neq \mathpzc{w}$, and null monotone for $\mathpzc{l} = \mathpzc{w}$, hence by \cref{prop:U_strongly_vs_relative}(ii), $\mathcal{G}$ is $\mu$-relatively hypo-monotone with respect to $h(x) = \sum_{p \in \mathcal{N}} {x^p}^\top \log(x^p)$ or $h(x) = \frac{1}{2} \sum_{p \in \mathcal{N}} \|x^p\|_2^2$. The NE of this game is $x^\star = ({x^p}^\star)_{p \in \mathcal{N}}, {x^p}^\star \!= \!(1/3, 1/3, 1/3)$, which coincides with $\overline x$ regardless of the value of $\epsilon$ \cite{Bo_LP_TAC2020}.
	
	From \cref{thm:convergence_DMD_relative_hypo_monotone}, DMD converges for any $\epsilon >  \frac{1}{2}| \mathpzc{l} -  \mathpzc{w}|$ with the rate \eqref{eqn:DMD_iterates_rel_hypomonotone}. We simulate DMD for an example with $\mathpzc{w} = 1, \mathpzc{l} = 5$, i.e., $\mu = 2$, and set players' parameters to be $\gamma = 1, \epsilon = 2.1$, with initial condition $x^p_0 = C_\epsilon(z^p(0)), z^p(0) = (1,2,3)$.  We plot the distances to the NE along with their upper-bounds in \cref{Fig:RPS_convergence_rates}.  

	Our result shows a close match between the distances along with their upper-bounds and conclusively shows that DMD with softmax is faster than DMD with Euclidean projection for this game. Furthermore, \cref{Fig:RPS_convergence_rates}, we see that that despite the extremely slow convergence of  DMD with Euclidean projection (e.g., does not converge even for $t = 100$), the exponentially decaying upper-bound is still able to accurately capture its rate of convergence (dotted, red). The gap between the dotted and the solid blue line in \cref{Fig:RPS_convergence_rates} can be made closer by using the Bregman divergence \eqref{eqn:DMD_iterates_rel_hypomonotone_Bregman} instead of the Euclidean distance \eqref{eqn:DMD_iterates_rel_hypomonotone} (see \cref{remark:gap}). 
\end{example}

\section{Conclusions}
\label{sec:conclusions}

In this paper, we have provided the rate of convergence for two main families of continuous-time dual-space game dynamics in $N$-player continuous monotone games, with or without potential. We have shown MD and DMD converge with exponential rates as long as its mirror map $C_\epsilon$ is generated with a regularizer that is matched to the geometry of the game, characterized through a relative function. Similarly, AC was also shown to exhibit exponential convergence in games with relatively strongly concave potential. Through this work, we clearly demonstrate the importance of geometry when analyzing the rates of dual-space dynamics.  

There are several open questions from our analysis. First, \textit{our results do not capture the rate of convergence towards boundary NEs}. However, in practice we have found that these bounds are still quite predictive. It is worth noting that many of the regularizers (such as generalized entropy) are not $\ell$-smooth over their entire domains \cite{Bo_LP_TechNote19}. Yet, the MD associated with these type of regularizers have been empirically shown to achieve faster rate of convergence in (strongly) monotone games, e.g., \cite{Bo_LP_TechNote19}. One possibility of explaining this disparity is by considering \textit{relatively smooth} regularizers, which we leave for future work. Finally another open question is how these continuous-time results relate to their discrete-time and stochastic approximation counter-parts. 

\section{Appendix}
\label{sec:appendix}
\begin{lemma}(Proposition 2 of \cite{Bo_LP_TechNote19})
	\label{lem:mirror_map_properties}
	Let $\psi_\epsilon^p\! \!= \!\epsilon \vartheta^p\!$, $\epsilon \!>\!\! 0$, where  $\vartheta^p$ satisfies   \cref{assump:primal}, and let ${\psi_\epsilon^p}^\star$ be the convex conjugate of $\psi_\epsilon^p$. 
	Then, 
	\begin{enumerate}
		\item[(i)] ${\psi_\epsilon^p}^\star\!:\!\mathbb{R}^{n_p} \!  \to \!\mathbb{R}\cup\{\infty\}$ is closed, proper, convex and finite-valued over $\mathbb{R}^{n_p}$, i.e., $\dom({\psi_\epsilon^p}^\star) = \mathbb{R}^{n_p}$.
		\item[(ii)]  ${\psi_\epsilon^p}^\star$ is continuously differentiable and $\nabla {\psi_\epsilon^p}^\star \!= \!C^p_\epsilon$. 
		\item[(iii)] $C^p_\epsilon$ is  $(\epsilon\rho)^{-1}$-Lipschitz on $ \mathbb{R}^{n_p}$. \
		\item[(iv)]  $C^p_\epsilon\!$ is surjective from $\mathbb{R}^{n_p}\!$ onto $\rinterior(\Omega^p)$ whenever $\psi_\epsilon^p\!$ is steep, and  onto $\Omega^p$ whenever $\psi_\epsilon^p\!$ is non-steep.  
	\end{enumerate}
\end{lemma}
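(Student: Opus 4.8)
The plan is to treat $\psi_\epsilon^p = \epsilon\vartheta^p$ as a closed, proper, $\epsilon\rho$-strongly convex function whose effective domain is the compact convex set $\Omega^p$ (these properties are inherited from $\vartheta^p$ under \cref{assump:primal}), and then to run the statement through standard Fenchel duality as developed in \cite{Beck17, Rockafellar}. For part (i), I would first recall that the conjugate of any proper function is automatically closed and convex, and that ${\psi_\epsilon^p}^\star$ is proper because $\psi_\epsilon^p$ is proper, closed and convex. Finiteness is the only substantive point: for each fixed $z^p$, the map $y^p \mapsto {y^p}^\top z^p - \psi_\epsilon^p(y^p)$ is upper semicontinuous on the compact set $\Omega^p = \dom(\psi_\epsilon^p)$ (since $\psi_\epsilon^p$ is closed, hence lower semicontinuous), so by the Weierstrass theorem the supremum defining ${\psi_\epsilon^p}^\star(z^p)$ is attained and finite. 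Hence $\dom({\psi_\epsilon^p}^\star) = \mathbb{R}^{n_p}$.

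For part (ii), the lever is that $\epsilon\rho$-strong convexity of $\psi_\epsilon^p$ renders the objective ${y^p}^\top z^p - \psi_\epsilon^p(y^p)$ strongly concave, so its maximizer is unique; by definition \eqref{eqn:mirror_map_argmax_char} this maximizer is exactly the single-valued $C^p_\epsilon(z^p)$. The conjugate subgradient theorem then identifies $\partial{\psi_\epsilon^p}^\star(z^p)$ with the set of maximizers, here the singleton $\{C^p_\epsilon(z^p)\}$. Since a convex function whose subdifferential is a singleton at every point of its (open, by (i)) domain is differentiable there with gradient equal to that singleton, I conclude that ${\psi_\epsilon^p}^\star$ is differentiable with $\nabla{\psi_\epsilon^p}^\star = C^p_\epsilon$; continuity of this gradient then follows from the Lipschitz bound in part (iii). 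For part (iii) I would invoke the duality between strong convexity and smoothness, namely that a $\sigma$-strongly convex function has a $(1/\sigma)$-smooth conjugate; with $\sigma = \epsilon\rho$ this gives that $C^p_\epsilon = \nabla{\psi_\epsilon^p}^\star$ is $(\epsilon\rho)^{-1}$-Lipschitz. Alternatively, a self-contained argument: writing $x_i = C^p_\epsilon(z_i)$, optimality gives $z_i \in \partial\psi_\epsilon^p(x_i)$, and $\epsilon\rho$-strong monotonicity of $\partial\psi_\epsilon^p$ yields $(z_1 - z_2)^\top(x_1 - x_2) \geq \epsilon\rho\|x_1 - x_2\|^2$; combining with Cauchy--Schwarz $(z_1-z_2)^\top(x_1-x_2) \leq \|z_1-z_2\|_\star\|x_1-x_2\|$ produces $\|x_1-x_2\| \leq (\epsilon\rho)^{-1}\|z_1-z_2\|_\star$.

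For part (iv), I would characterize the range of $C^p_\epsilon$ through Fenchel--Young inversion: by part (ii), $x^p = C^p_\epsilon(z^p)$ if and only if $z^p \in \partial\psi_\epsilon^p(x^p)$, so $\range(C^p_\epsilon) = \dom(\partial\psi_\epsilon^p)$. Since $\rinterior(\Omega^p) \subseteq \dom(\partial\psi_\epsilon^p) \subseteq \Omega^p$ always holds, the steep and non-steep cases can differ only on the relative boundary $\rboundary(\Omega^p)$. In the steep case $\psi_\epsilon^p$ is essentially smooth, so $\partial\psi_\epsilon^p$ is empty at every relative-boundary point, giving $\range(C^p_\epsilon) = \rinterior(\Omega^p)$. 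In the non-steep case, at a relative-boundary point $x^p$ the gradient of $\vartheta^p$ remains finite, so any $z^p = \nabla\psi_\epsilon^p(x^p) + v$ with $v \in N_{\Omega^p}(x^p)$ satisfies $z^p \in \partial\psi_\epsilon^p(x^p)$; since the constraint is already baked into $\psi_\epsilon^p$ via $\dom(\psi_\epsilon^p) = \Omega^p$, every boundary point is thus attained and $\range(C^p_\epsilon) = \Omega^p$.

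The hard part will be part (iv): the clean identification $\range(C^p_\epsilon) = \dom(\partial\psi_\epsilon^p)$ requires the Fenchel--Young inversion to be applied carefully, and the steep/non-steep dichotomy forces me to invoke essential smoothness (in the sense of \cite{Rockafellar}) for the steep case and a normal-cone surjectivity argument for the non-steep case, whereas parts (i)--(iii) are routine once the strong-convexity/smoothness duality is in hand.
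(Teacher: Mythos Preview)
Your proposal is correct and supplies a self-contained proof via standard Fenchel duality. Note, however, that the paper does not actually prove this lemma: it is stated in the Appendix with the attribution ``Proposition 2 of \cite{Bo_LP_TechNote19}'' and no argument is given. So there is no ``paper's own proof'' to compare against --- your write-up is strictly more than what the paper provides.

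One minor caveat on part (iv): your non-steep argument assumes $\nabla\vartheta^p$ remains finite at \emph{every} relative-boundary point, but the paper's definition of non-steep is merely the negation of steep (i.e., there exists \emph{some} boundary sequence along which the gradient norm does not blow up). Taken literally, that is not enough to conclude $\dom(\partial\psi_\epsilon^p) = \Omega^p$. In practice the intended dichotomy is between essentially smooth regularizers and those whose subdifferential is nonempty on all of $\Omega^p$ (as in the Euclidean example), and your argument is correct under that reading; you may wish to flag this as a standing convention rather than a consequence of the stated definition.
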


The following results will make heavy use of several well-known properties of the Bregman divergence (and their minor extensions), which can be found in a variety of references such as \cite{Amari16, Beck17}.

\begin{lemma}
	\label{lem:prop_bregman}
	Let $h: \mathbb{R}^n \to \mathbb{R} \cup \{\infty\}$ be a proper, closed, convex function, then, 
	\begin{itemize}
		\item[(i)] $D_h(x,y) \geq 0$ and equals $0$ if and only if $x = y$ whenever $h$ is strictly convex.
		\item[(ii)] Let $h^\star$ be the convex conjugate of $h$, then $D_{h^\star}(z^\prime, z) = D_h(x, x^\prime)$, where $z \in \partial h(x), z^\prime \in \partial h(x^\prime), x \in \partial h^\star(z), x^\prime \in \partial h^\star(z^\prime).$
		\item[(iii)] $D_h(x,y) + D_h(y,x) = (x-y)^\top(\nabla h(x) - \nabla h(y)), \forall x,y \in \dom(\partial h).$
	\end{itemize}
\end{lemma}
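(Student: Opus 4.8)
The plan is to verify the three items directly from the definition $D_h(x,y) = h(x) - h(y) - \nabla h(y)^\top(x-y)$, using only first-order convexity and the Fenchel--Young equality; no new machinery beyond what is already invoked in \cite{Beck17, Amari16} is needed, so the role of this lemma is purely to collect the facts we will use repeatedly.

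For (i), I would invoke the gradient inequality for convex functions: since $h$ is proper, closed, convex and (by the standing convention under which $D_h$ is defined) differentiable on $\dom(\partial h)$, we have $h(x) \geq h(y) + \nabla h(y)^\top(x-y)$ for every $x \in \dom(h)$ and $y \in \dom(\partial h)$, which is exactly $D_h(x,y) \geq 0$. Since $D_h(y,y) = 0$ trivially, it remains to note that when $h$ is strictly convex the gradient inequality is strict whenever $x \neq y$, so $D_h(x,y) = 0$ forces $x = y$.

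For (iii), the identity is a one-line telescoping computation: adding $D_h(x,y) = h(x) - h(y) - \nabla h(y)^\top(x-y)$ and $D_h(y,x) = h(y) - h(x) - \nabla h(x)^\top(y-x)$ cancels the $h(x)$ and $h(y)$ terms and leaves $(\nabla h(x) - \nabla h(y))^\top(x-y)$, valid whenever $x,y \in \dom(\partial h)$ so that both gradients exist. For (ii), the key tool is the Fenchel--Young equality: for proper closed convex $h$, $z \in \partial h(x) \iff x \in \partial h^\star(z) \iff h(x) + h^\star(z) = x^\top z$, and in that case $\nabla h(x) = z$ and $\nabla h^\star(z) = x$ wherever the function in question is differentiable. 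I would then rewrite both divergences in the same symmetric ``Fenchel gap'' form: substituting $h^\star(z') = {x'}^\top z' - h(x')$ and $\nabla h(x') = z'$ into $D_h(x,x')$ gives $D_h(x,x') = h(x) + h^\star(z') - x^\top z'$, while substituting $h^\star(z) = x^\top z - h(x)$ and $\nabla h^\star(z) = x$ into $D_{h^\star}(z',z) = h^\star(z') - h^\star(z) - x^\top(z'-z)$ gives $D_{h^\star}(z',z) = h(x) + h^\star(z') - x^\top z'$ as well; comparing the two expressions yields the claim.

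The only real care point — and the closest thing to an obstacle in an otherwise routine argument — is domain bookkeeping: one must keep $x,x' \in \dom(\partial h)$ and $z,z' \in \dom(\partial h^\star)$ so that the conjugate-subgradient correspondence and the gradient identities are legitimate, and one should note that in (ii) $D_{h^\star}$ is read in the (mildly extended) sense that uses the particular subgradient $x \in \partial h^\star(z)$ rather than assuming $h^\star$ smooth everywhere. This is exactly what the hypotheses pairing $z \in \partial h(x)$ with $x \in \partial h^\star(z)$ (and likewise with primes) supply, and in the concrete instances where $h = \psi_\epsilon^p$ the requisite differentiability of the conjugate is furnished by \cref{lem:mirror_map_properties}(ii).
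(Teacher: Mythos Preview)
Your proof is correct. The paper itself does not prove this lemma at all: it is stated as a collection of ``well-known properties of the Bregman divergence (and their minor extensions), which can be found in a variety of references such as \cite{Amari16, Beck17},'' and no argument is given. What you have written is precisely the standard elementary derivation one would find in those references --- the gradient inequality for (i), direct cancellation for (iii), and the Fenchel--Young equality $h(x)+h^\star(z)=x^\top z$ to reduce both sides of (ii) to the common expression $h(x)+h^\star(z')-x^\top z'$ --- so your write-up effectively fills in what the paper chose to cite rather than spell out. Your closing remark on domain bookkeeping (reading $D_{h^\star}$ via the particular subgradient $x\in\partial h^\star(z)$, with smoothness of the conjugate supplied by \cref{lem:mirror_map_properties}(ii) in the concrete $h=\psi_\epsilon^p$ case) is exactly the care the paper's applications require.
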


\begin{proof}(\textbf{Proof of \cref{prop:U_strongly_vs_relative}})
	Using \cref{lem:prop_bregman}(iii),  $D_h(x, x^\prime) + D_h(x^\prime, x) \!= (x - x^\prime)^\top(\nabla h(x) - \nabla h(x^\prime)), \forall x, x^\prime \in \dom(\partial h).$ Then \textit{(i)} follows from, $l\|x - x^\prime\|_2^2 \geq (x - x^\prime)^\top(\nabla h(x) - \nabla h(x^\prime))$, $\eta\|x \!-\! x^\prime\|^2_2   \geq (U(x) \!-\! U(x^\prime))\!^\top\!(x\!-\!x^\prime)$ ($\mathcal{G}$ is $\eta$-strongly monotone), and \textit{(ii)} follows from,  $(x - x^\prime)^\top(\nabla h(x) - \nabla h(x^\prime)) \geq \rho \|x-x^\prime\|_2^2$ and  $\mu\|x \!-\! x^\prime\|^2_2 \geq (U(x) \!-\! U(x^\prime))\!^\top\!(x\!-\!x^\prime)$ ($\mathcal{G}$ is $\mu$-hypo-monotone). 
\end{proof}

\begin{proof}(\textbf{Proof of \cref{thm:convergence_MD_relative_strongly_monotone}})\label{proof:theorem_relative_sc_monotone} By \cite[Theorem 1]{mertikopoulos2017convergence}, the strategies $x(t)$ generated by MD converges globally to the unique interior NE $x^\star \in \rinterior(\Omega)$.  Consider the Lyapunov function,
	\begin{equation}
		\label{eqn:Lyapunov_function_monotone_proof}
		\textstyle V_{z}(t) = \gamma^{-1} \sum_{p \in \mathcal{N}} D_{{\psi_\epsilon^p}^\star}(z^p,  {z^p}^\star),
	\end{equation}
	where  $D_{{\psi_\epsilon^p}^\star}$ is the Bregman divergence of ${\psi_\epsilon^p}^\star$. The rest point conditions \eqref{eqn:MD_rest_points} $N_\Omega(x^\star) \ni   U(x^\star)$ implies $U(x^\star) - n(x^\star) = \mathbf{0}$, for any normal vector $n(x^\star) \in N_\Omega(x^\star)$. Taking the time-derivative of $V_z$ along the solutions of MD and using \cref{lem:mirror_map_properties}, $C_\epsilon = (\nabla {\psi_\epsilon^p}^\star)_{p \in \mathcal{N}}$, $x = C_\epsilon(z)$,  $x^\star = C_\epsilon(z^\star)$,
	\begin{align*}
		\dot V_{z}(t) & =  (C_\epsilon(z) - C_\epsilon({z}^\star))^\top U(x) \\
		& = (x-x^\star)^\top(U(x) - U(x^\star) + n(x^\star)) \\
		& = (x-x^\star)^\top(U(x) - U(x^\star)) + (x-x^\star)^\top n(x^\star)\\
		& \leq (x-x^\star)^\top(U(x) - U(x^\star))\\
		& \leq -\eta (D_h(x, x^\star)\! +\! D_h(x^\star, x)),
	\end{align*}
	where we used the definition of a normal vector and $\eta$-relative strong monotonicity of $\mathcal{G}$.
	Using $D_h(x,x^\star) \geq 0$, $D_h(x^\star, x) = \sum_{p \in \mathcal{N}} D_{\vartheta^p }({x^p}^\star, x^p) = \epsilon^{-1} \sum_{p \in \mathcal{N}} D_{\psi_\epsilon^p}({x^p}^\star, x^p)\!=\! \epsilon^{-1}\!\sum_{p \in \mathcal{N}} D_{\psi_\epsilon^p}({x^p}^\star, x^p)$ and $D_{\psi_\epsilon^p}({x^p}^\star, x^p) = D_{{\psi_\epsilon^p}^\star}(z^p, {z^p}^\star)$ (\cref{lem:prop_bregman}(ii)), 
	\begin{align*}
		\dot V_{z}(t) & \textstyle  \leq -\eta D_h(x^\star, x) \leq -\eta\epsilon^{-1} \sum_{p \in \mathcal{N}} D_{{\psi_\epsilon^p}^\star}({z^p}, {z^p}^\star) = -\gamma \eta \epsilon^{-1}  V_z(t),
	\end{align*}  
	hence $V_{z}(t)\!\leq\! e^{-\gamma \eta \epsilon^{-1} t} V_{z}(0)$, which in turn implies  \eqref{eqn:MD_iterates_relative_strongly_monotone_Bregman}. \eqref{eqn:MD_iterates_relative_strongly_monotone} then follows from $D_{\vartheta^p}(x^p, {x^p}^\star) \geq \dfrac{\rho}{2}\|x^p- {x^p}^\star\|_2^2$.
\end{proof}

\begin{proof}(\textbf{Proof of \cref{prop:pseudo_gradient_rel_hypo_monotone}})
	\begingroup
	\allowdisplaybreaks
	Using the definition of $\widetilde U$, we have,
	\begin{align*} 
		(x-x^\prime)^\top(\widetilde U(x) - \widetilde U(x^\prime)) & = (x-x^\prime)^\top(U(x) - U(x^\prime)) - (x-x^\prime)^\top(\Psi_\epsilon(x) - \Psi_\epsilon(x^\prime))\\
		& \leq \mu (D_h(x, x^\prime)\! +\! D_h(x^\prime, x)) - (x-x^\prime)^\top(\Psi_\epsilon(x) - \Psi_\epsilon(x^\prime))\\
		& =  \mu (D_h(x, x^\prime)\! +\! D_h(x^\prime, x)) - \epsilon  (D_h(x, x^\prime)\! +\! D_h(x^\prime, x))\\
		& =  -(\epsilon - \mu) (D_h(x, x^\prime)\! +\! D_h(x^\prime, x)), \epsilon > \mu,
	\end{align*}
	where the first inequality follows from $\mu$-relative hypo-monotonicity and the equality immediate after uses $(x-x^\prime)^\top(\Psi_\epsilon(x) - \Psi_\epsilon(x^\prime)) = \epsilon  (D_h(x, x^\prime)\! +\! D_h(x^\prime, x))$, which  can be shown through a straightforward calculation (see \cref{lem:prop_bregman}(iii)).  By \cref{def:relative_monotone}, $\widetilde{\mathcal{G}}$ is $(\epsilon-\mu)$-relatively strongly monotone whenever $\epsilon > \mu$. 
	\endgroup
\end{proof}

\begin{proof}(\textbf{Proof of \cref{thm:convergence_DMD_relative_hypo_monotone}}) By \cite[Theorem 1]{Bo_LP_TechNote19}, the strategies $x(t)$ generated by DMD converges globally towards the unique interior perturbed NE $\overline x \in \rinterior(\Omega)$. 
	To derive the rate of convergence, we begin by showing that DMD can be transformed into an equivalent \textit{undiscounted} dynamics and then apply the same approach as in the MD case. 
	 Using $C_\epsilon = (C^p_\epsilon)_{p \in \mathcal{N}}$, it follows $x =  C_\epsilon(z) \Rightarrow z \in (C_\epsilon)^{-1}(x)$, where $(C_\epsilon)^{-1}(x) = (({C^p_\epsilon})^{-1}(x^p))_{p \in \mathcal{N}}=  (\partial \psi_\epsilon^p(x^p))_{p \in \mathcal{N}} =  (\nabla \psi_\epsilon^p(x^p) + N_{\Omega^p}(x^p))_{p \in \mathcal{N}} = \Psi_\epsilon(x) + N_{\Omega}(x)$, where $\Psi_\epsilon \coloneqq (\nabla \psi_\epsilon^p)_{p \in \mathcal{N}}$ and $N_{\Omega} \coloneqq (N_{\Omega^p})_{p \in \mathcal{N}}$, $N_{\Omega^p}$ is the normal cone of $\Omega^p$. This allows us to obtain an equivalent expression of DMD as an undiscounted dynamics whereby the pseudo-gradient is subjected to regularization,
	\begin{equation}
		\label{eqn:reformulated_DMD}
		\dot z  \in \gamma(U - \Psi_\epsilon - N)(x), \quad x =  C_\epsilon(z), 
	\end{equation} 
	or equivalently, let $n(x) \in N(x)$, then,
	\begin{equation}
		\label{eqn:reformulated_DMD_2}
		\dot z  =  \gamma(U - \Psi_\epsilon)(x) - \gamma n(x), \quad x =  C_\epsilon(z).
	\end{equation} 
	Letting  $\widetilde U \coloneqq  U - \Psi_\epsilon$, then by  \cref{lem:equilibria}, we see that $\widetilde U$ is the pseudo-gradient associated with the perturbed game with payoffs given by \eqref{eqn:perturbed_payoff}. 
	
	We now proceed to show the rate of convergence by employing the same Lyapunov function as \eqref{eqn:Lyapunov_function_monotone_proof}, except that we replace $z^\star$ by $\overline z$, where $\overline x = C_\epsilon(\overline z)$. Using \cref{lem:mirror_map_properties}(i), $C_\epsilon = (\nabla {\psi_\epsilon^p}^\star)_{p \in \mathcal{N}}$, $x = C_\epsilon(z)$,  taking the time-derivative of $V_z$ along the solutions of DMD,
	\begin{align*}
		 \dot V_{z}(t) &  = (C_\epsilon(z) - C_\epsilon(\overline z))^\top \dot z =   (C_\epsilon(z) - C_\epsilon(\overline z))^\top( \widetilde{U}(x) - n(x)).
	\end{align*}
	Subtracting rest point condition of \eqref{eqn:reformulated_DMD_2}, $(N_\Omega + (C_\epsilon)^{-1})(\overline x) \ni U(\overline x) \implies \mathbf{0} = \widetilde{U}(\overline x) - n(\overline x)$ on the right-hand side and using the monotonicity of the normal cone \cite{Beck17}, we obtain,
	\begin{align*}
		 \dot V_{z}(t)	& = (x-\overline x)^\top(\widetilde{U}(x) - \widetilde{U}(\overline x)) - (x - \overline x)^\top(n(x) - n(\overline x))\\
		& \leq (x-\overline x)^\top(\widetilde{U}(x) - \widetilde{U}(\overline x)) \leq -(\epsilon - \mu) D_h(\overline x, x),
	\end{align*} where the final inequality follows from \cref{prop:pseudo_gradient_rel_hypo_monotone} and $D_h(x, \overline x) \geq 0$. Then,
	\begin{align*}
		&  \dot V_{z}(t)  \textstyle \leq - (\epsilon - \mu)\epsilon^{-1} \sum_{p \in \mathcal{N}} D_{{\psi_\epsilon^p}}(\overline{x}^p, x^p) = -\gamma(\epsilon - \mu)\epsilon^{-1}  V_z(t), \epsilon > \mu,	
	\end{align*}
	which follows from $\textstyle D_h(\overline x, x) = \sum_{p \in \mathcal{N}} D_{{\vartheta^p}}( \overline x^p, x^p) =  \epsilon^{-1} \sum_{p \in \mathcal{N}} D_{{\psi_\epsilon^p}}( \overline x^p, x^p)$, and $D_{{\psi_\epsilon^p}^\star}(z^p, \overline z^p) \!=\! D_{{\psi_\epsilon^p}}(\overline x^p, x^p)$ (\cref{lem:prop_bregman}(ii)). Then \cref{eqn:DMD_iterates_rel_hypomonotone_Bregman} follows from  $V_z(t) \leq -\gamma(\epsilon - \mu)\epsilon^{-1} V_z(0)$. The rate for the null monotone case can be directly obtained from above by plugging in $\mu = 0$. 
\end{proof}

\begin{proof}(\textbf{Proof of \cref{thm:convergence_AC_relative_sc}}) Let $x^\star \in \interior(\Omega)$ the unique interior NE and consider, 
	\begin{align*}
		\textstyle V_{x,z}(t)  = \epsilon e^{ \gamma\eta \epsilon^{-1} t}  ( r^{-1}(P(x^\star) - P(x)) + \gamma^{-1}\sum_{p \in \mathcal{N}}  D_{{\psi_\epsilon^p}^\star}(z^p, {z^p}^\star)),
	\end{align*} 
	where $D_{{\psi_\epsilon^p}^\star}$ is the Bregman divergence of ${\psi_\epsilon^p}^\star$. Using \cref{lem:mirror_map_properties}, $C_\epsilon = (\nabla {\psi_\epsilon^p}^\star)_{p \in \mathcal{N}}$, $x^\star = C_\epsilon(z^\star)$, $\dot z = \gamma U(x) = \gamma \nabla P(x)$, taking the time-derivative of $V_{x,z}$ along the solutions of AC, we obtain,
	\begingroup
	\allowdisplaybreaks
	\begin{flalign*}
	 & \dot V_{x,z}(t)
	= \textstyle \eta \gamma e^{\gamma \eta \epsilon^{-1} t}  (r^{-1}(P(x^\star) - P(x))  + \gamma^{-1} \sum_{p \in \mathcal{N}} D_{{\psi_\epsilon^p}^\star}(z^p, {z^p}^\star)) &\\ 
	+ & \, \epsilon  e^{\gamma \eta \epsilon^{-1} t}(r^{-1}(-(\nabla P(x))^\top \dot x) + \gamma^{-1} (C_\epsilon(z) - C_\epsilon(z^\star))^\top \dot z) \\
	= & \, \textstyle   \eta \gamma e^{\eta \gamma\epsilon^{-1}  t}  (r^{-1}(P(x^\star) - P(x))  + \gamma^{-1} \sum_{p \in \mathcal{N}} D_{{\psi_\epsilon^p}^\star}(z^p, {z^p}^\star)) \\ 
	+ & \, \epsilon e^{\gamma \eta \epsilon^{-1} t}(-\nabla P(x)^\top (C_\epsilon(z) - x) + (C_\epsilon(z) - C_\epsilon(z^\star))^\top \nabla P(x))\\
	= & \, \textstyle  \eta e^{\gamma \eta \epsilon^{-1} t}  (\gamma r^{-1}(P(x^\star) - P(x))  + \sum_{p \in \mathcal{N}} D_{{\psi_\epsilon^p}^\star}(z^p, {z^p}^\star)) \\ 
	+ & \, \epsilon e^{\gamma \eta \epsilon^{-1} t} (-\nabla P(x)^\top(C_\epsilon(z) - C_\epsilon(z^\star) + x^\star - x)  +  (C_\epsilon(z) - C_\epsilon(z^\star))^\top \nabla P(x))\\
	= & \, \textstyle  \eta e^{\gamma \eta \epsilon^{-1} t}  (\gamma r^{-1}(P(x^\star)- P(x))  + \sum_{p \in \mathcal{N}} D_{{\psi_\epsilon^p}}({x^p}^\star, x^p)) \! -\!   \epsilon e^{\gamma \eta \epsilon^{-1} t} (\nabla P(x)^\top(x^\star - x))\\	
	\leq & \, \textstyle  \eta e^{\gamma \eta \epsilon^{-1} t} (\gamma r^{-1}(P(x^\star) - P(x))  + \sum_{p \in \mathcal{N}} D_{{\psi_\epsilon^p}}({x^p}^\star, x^p)) \\
	+ & \, \textstyle \epsilon e^{\gamma \eta \epsilon^{-1} t}(P(x) - P(x^\star) - \eta \sum_{p \in \mathcal{N}} D_{{\vartheta^p}}({x^p}^\star, {x^p}))\\	
	= & \, e^{\gamma \eta \epsilon^{-1} t}(\epsilon-\dfrac{\eta\gamma }{r})(P(x) - P(x^\star)) < 0, \forall x \neq x^\star,  \epsilon  > \frac{\eta\gamma}{r},
	\end{flalign*}
	\endgroup 
	where the inequality follows from from $\eta$-relative strong concavity of $P$  with respect to $h(x) = \sum_{p \in \mathcal{N}} \vartheta^p(x^p)$ (\cref{def:potential_relatively_concave}), $D_{{\vartheta^p}^\star}(z^p, {z^\star}^p) \!=\! D_{{\vartheta}^p}({x^p}^\star, x^p)$ (\cref{lem:prop_bregman}(ii)), and the last line follows from $\epsilon \sum_{p \in \mathcal{N}} D_{{\vartheta^p}}({x^p}^\star, x^p) =  \sum_{p \in \mathcal{N}} D_{{\psi_\epsilon^p}}({x^p}^\star, x^p) $. 
	
	Then \eqref{eqn:AC_potential_relative_sc} follows from $V_{x, z}(t)\leq V_{x, z}(0)$, $\sum_{p \in \mathcal{N}} D_{{\psi_\epsilon^p}^\star}(z^p, {z^p}^\star) \geq 0$, \eqref{eqn:thm_AC_iterates_relative_sc_bregman} follows from $P(x^\star) - P(x) \geq \eta \sum_{p \in \mathcal{N}} D_{{\vartheta^p}}(x^p, {x^p}^\star) = \eta D_{h}(x, {x}^\star) $ and  \eqref{eqn:thm_AC_iterates_relative_sc_euclidean} follows from $D_{\vartheta^p}(x^p, {x^p}^\star) \geq \dfrac{ \rho}{2}\|x^p- {x^p}^\star\|_2^2$ whenever  $\vartheta^p$ is $\rho$-strongly convex. 
\end{proof}

\newpage
\bigskip\bigskip\bigskip
\bibliographystyle{plainurl}
\bibliography{References}

\begin{thebibliography}{10}

\bibitem{Amari16}
S.~Amari.
\newblock {\em Information Geometry and Its Applications}.
\newblock Springer, 2016.

\bibitem{Azizian20}
W.~Azizian, I.~Mitliagkas, S.~Lacoste-Julien, and G.~Gidel.
\newblock A tight and unified analysis of gradient-based methods for a whole
  spectrum of differentiable games.
\newblock In {\em International Conference on Artificial Intelligence and
  Statistics}, pages 2863--2873, 2020.

\bibitem{Basar}
T.~Ba\c{s}ar and Olsder G.
\newblock {\em Dynamic noncooperative game theory}.
\newblock SIAM, 1999.

\bibitem{Beck17}
A.~Beck.
\newblock {\em First-Order Methods in Optimization}.
\newblock SIAM, 1st edition, 2017.

\bibitem{Bottou18}
L.~Bottou, F.~E. Curtis, and J.~Nocedal.
\newblock Optimization methods for large-scale machine learning.
\newblock {\em Siam Review}, 60(2):223--311, 2018.

\bibitem{Cherukuri17}
A.~Cherukuri, B.~Gharesifard, and J.~Cortes.
\newblock Saddle-point dynamics: conditions for asymptotic stability of saddle
  points.
\newblock {\em SIAM Journal on Control and Optimization}, 55(1):486--511, 2017.

\bibitem{Cohen17_Hedge}
J.~Cohen, A.~H{\'e}liou, and P.~Mertikopoulos.
\newblock Hedging under uncertainty: regret minimization meets exponentially
  fast convergence.
\newblock In {\em International Symposium on Algorithmic Game Theory}, pages
  252--263. Springer, 2017.

\bibitem{Coucheney15}
P.~Coucheney, B.~Gaujal, and P.~Mertikopoulos.
\newblock Penalty-regulated dynamics and robust learning procedures in games.
\newblock {\em Mathematics of Operations Research}, 40(3):611--633, 2015.

\bibitem{Daskalakis18}
C.~Daskalakis, A.~Ilyas, V.~Syrgkanis, and H.~Zeng.
\newblock Training {GAN}s with optimism.
\newblock In {\em International Conference on Learning Representations}, 2018.

\bibitem{Orecchia19}
J.~Diakonikolas and L.~Orecchia.
\newblock The approximate duality gap technique: A unified theory of
  first-order methods.
\newblock {\em SIAM Journal on Optimization}, 29(1):660--689, 2019.

\bibitem{Duchi}
J.~C. Duchi, A.~Agarwal, and M.~J. Wainwright.
\newblock Dual averaging for distributed optimization: Convergence analysis and
  network scaling.
\newblock {\em IEEE Transactions on Automatic control}, 57(3):592--606, 2011.

\bibitem{Facchinei_I}
F.~Facchinei and J.-S. Pang.
\newblock {\em Finite-dimensional Variational Inequalities and Complementarity
  Problems}, volume Vol. I \& II.
\newblock Springer, 2003.

\bibitem{Flam90}
S.~D. Fl{\aa}m.
\newblock Solving non-cooperative games by continuous subgradient projection
  methods.
\newblock {\em System Modelling and Optimization}, pages 115--123, 1990.

\bibitem{Basar12}
P.~Frihauf, M.~Krstic, and T.~Basar.
\newblock Nash equilibrium seeking in noncooperative games.
\newblock {\em IEEE Transactions on Automatic Control}, 57(5):1192--1207, 2011.

\bibitem{Dian_Pavel_TAC2019}
D.~Gadjov and L.~Pavel.
\newblock A passivity-based approach to {N}ash equilibrium seeking over
  networks.
\newblock {\em IEEE Transactions on Automatic Control}, 64(3):1077--1092, 2018.

\bibitem{Bo_LP_CDC2020}
B.~Gao and L.~Pavel.
\newblock On the rate of convergence of continuous-time game dynamics in
  {N}-player potential games.
\newblock In {\em 2020 59th IEEE Conference on Decision and Control (CDC)},
  pages 1678--1683, 2020.
\newblock \href {https://doi.org/10.1109/CDC42340.2020.9304211}
  {\path{doi:10.1109/CDC42340.2020.9304211}}.

\bibitem{Bo_LP_TechNote19}
B.~Gao and L.~Pavel.
\newblock Continuous-time discounted mirror descent dynamics in monotone
  concave games.
\newblock {\em IEEE Transactions on Automatic Control}, 66(11):5451--5458,
  2021.
\newblock \href {https://doi.org/10.1109/TAC.2020.3045094}
  {\path{doi:10.1109/TAC.2020.3045094}}.

\bibitem{Bo_LP_TAC2020}
B.~Gao and L.~Pavel.
\newblock On passivity, reinforcement learning, and higher order learning in
  multiagent finite games.
\newblock {\em IEEE Transactions on Automatic Control}, 66(1):121--136, 2021.
\newblock \href {https://doi.org/10.1109/TAC.2020.2978037}
  {\path{doi:10.1109/TAC.2020.2978037}}.

\bibitem{Goodfellow14}
I.~Goodfellow, J.~Pouget-Abadie, M.~Mirza, B.~Xu, D.~Warde-Farley, S.~Ozair,
  A.~Courville, and Y.~Bengio.
\newblock Generative adversarial nets.
\newblock In {\em Advances in neural information processing systems}, pages
  2672--2680, 2014.

\bibitem{hanson1988comparing}
S.~Hanson and L.~Pratt.
\newblock Comparing biases for minimal network construction with
  back-propagation.
\newblock {\em Advances in neural information processing systems}, 1:177--185,
  1988.

\bibitem{Hart03}
S.~Hart and A.~Mas-Colell.
\newblock Uncoupled dynamics do not lead to {N}ash equilibrium.
\newblock {\em American Economic Review}, 93(5):1830--1836, 2003.

\bibitem{Cohen17}
A.~Heliou, J.~Cohen, and P.~Mertikopoulos.
\newblock Learning with bandit feedback in potential games.
\newblock In {\em Advances in Neural Information Processing Systems}, pages
  6369--6378, 2017.

\bibitem{Huang20}
B.~Huang, Y.~Zou, and Z.~Meng.
\newblock Distributed-observer-based {N}ash equilibrium seeking algorithm for
  quadratic games with nonlinear dynamics.
\newblock {\em IEEE Transactions on Systems, Man, and Cybernetics: Systems},
  2020.

\bibitem{Kadan}
A.~Kadan and H.~Fu.
\newblock Exponential convergence of gradient methods in concave network
  zero-sum games.
\newblock In Frank Hutter, Kristian Kersting, Jefrey Lijffijt, and Isabel
  Valera, editors, {\em Machine Learning and Knowledge Discovery in Databases},
  pages 19--34, Cham, 2021. Springer International Publishing.

\bibitem{Krichene15}
W.~Krichene, A.~Bayen, and P.~L. Bartlett.
\newblock Accelerated mirror descent in continuous and discrete time.
\newblock In {\em Advances in neural information processing systems}, pages
  2845--2853, 2015.

\bibitem{Laraki13}
R.~Laraki and P.~Mertikopoulos.
\newblock Higher order game dynamics.
\newblock {\em Journal of Economic Theory}, 148(6):2666--2695, 2013.

\bibitem{Leslie03}
D.~S. Leslie, E.~J. Collins, et~al.
\newblock Convergent multiple-timescales reinforcement learning algorithms in
  normal form games.
\newblock {\em The Annals of Applied Probability}, 13(4):1231--1251, 2003.

\bibitem{Lu_Relative_SC}
H.~Lu, R.~M. Freund, and Y.~Nesterov.
\newblock Relatively smooth convex optimization by first-order methods, and
  applications.
\newblock {\em SIAM Journal on Optimization}, 28(1):333--354, 2018.

\bibitem{Mertikopoulos18}
P.~Mertikopoulos, C.~Papadimitriou, and G.~Piliouras.
\newblock Cycles in adversarial regularized learning.
\newblock In {\em Proceedings of the Twenty-Ninth Annual ACM-SIAM Symposium on
  Discrete Algorithms}, pages 2703--2717. SIAM, 2018.

\bibitem{Mertikopoulos16}
P.~Mertikopoulos and W.~H. Sandholm.
\newblock Learning in games via reinforcement and regularization.
\newblock {\em Mathematics of Operations Research}, 41(4):1297--1324, 2016.

\bibitem{mertikopoulos2017convergence}
P.~Mertikopoulos and M.~Staudigl.
\newblock Convergence to {N}ash equilibrium in continuous games with noisy
  first-order feedback.
\newblock In {\em 2017 IEEE 56th Annual Conference on Decision and Control
  (CDC)}, pages 5609--5614. IEEE, 2017.

\bibitem{Mokhtari2020}
A.~Mokhtari, A.~Ozdaglar, and S.~Pattathil.
\newblock A unified analysis of extra-gradient and optimistic gradient methods
  for saddle point problems: Proximal point approach.
\newblock In {\em International Conference on Artificial Intelligence and
  Statistics}, pages 1497--1507. PMLR, 2020.

\bibitem{Murphy}
K.~Murphy.
\newblock {\em Machine Learning: A Probabilistic Perspective}.
\newblock MIT Press, 2012.

\bibitem{ochoa2019hybrid}
D.~E. Ochoa, J.~I. Poveda, C.~A. Uribe, and N.~Quijano.
\newblock Hybrid robust optimal resource allocation with momentum.
\newblock In {\em 2019 IEEE 58th Conference on Decision and Control (CDC)},
  pages 3954--3959. IEEE, 2019.

\bibitem{Perkins17}
S.~Perkins, P.~Mertikopoulos, and D.~S. Leslie.
\newblock Mixed-strategy learning with continuous action sets.
\newblock {\em IEEE Transactions on Automatic Control}, 62(1):379--384, 2017.

\bibitem{Li19}
G.~Qu and N.~Li.
\newblock On the exponential stability of primal-dual gradient dynamics.
\newblock {\em IEEE Control Systems Letters}, 3(1):43--48, 2018.

\bibitem{Raginsky12}
M.~Raginsky and J.~Bouvrie.
\newblock Continuous-time stochastic mirror descent on a network: Variance
  reduction, consensus, convergence.
\newblock In {\em 2012 IEEE 51st IEEE Conference on Decision and Control
  (CDC)}, pages 6793--6800. IEEE, 2012.

\bibitem{Rockafellar}
R.~T. Rockafellar.
\newblock {\em Convex Analysis}.
\newblock Princeton Univ. Press, 1st edition, 1979.

\bibitem{Sandholm10}
W.~H. Sandholm.
\newblock {\em Population Games and Evolutionary Dynamics}.
\newblock Cambridge, MA, USA: MIT Press, 2010.

\bibitem{Sato}
Y.~Sato and J.~P. Crutchfield.
\newblock Coupled replicator equations for the dynamics of learning in
  multiagent systems.
\newblock {\em Physical Review E}, 67(1):015206, 2003.

\bibitem{Shamma05}
J.~S. Shamma and G.~Arslan.
\newblock Dynamic fictitious play, dynamic gradient play, and distributed
  convergence to {N}ash equilibria.
\newblock {\em IEEE Transactions on Automatic Control}, 50(3):312--327, 2005.

\bibitem{Su}
W.~Su, S.~Boyd, and E.~J. Candes.
\newblock A differential equation for modeling {N}esterov's accelerated
  gradient method: theory and insights.
\newblock {\em The Journal of Machine Learning Research}, 17(1):5312--5354,
  2016.

\bibitem{Swenson17FP}
B.~Swenson and S.~Kar.
\newblock On the exponential rate of convergence of fictitious play in
  potential games.
\newblock In {\em 2017 55th Annual Allerton Conference on Communication,
  Control, and Computing (Allerton)}, pages 275--279. IEEE, 2017.

\bibitem{Swenson18BR}
B.~Swenson, R.~Murray, and S.~Kar.
\newblock On best-response dynamics in potential games.
\newblock {\em SIAM Journal on Control and Optimization}, 56(4):2734--2767,
  2018.

\bibitem{Nedic18}
T.~Tatarenko, W.~Shi, and A.~Nedi{\'c}.
\newblock Accelerated gradient play algorithm for distributed {N}ash
  equilibrium seeking.
\newblock In {\em 2018 IEEE Conference on Decision and Control (CDC)}, pages
  3561--3566. IEEE, 2018.

\bibitem{Wang18}
J.-K. Wang and J.~D. Abernethy.
\newblock Acceleration through optimistic no-regret dynamics.
\newblock In {\em Advances in Neural Information Processing Systems}, pages
  3824--3834, 2018.

\bibitem{Weibull}
J.~Weibull.
\newblock {\em Evolutionary game theory}.
\newblock MIT Press, 1995.

\bibitem{Xu18}
P.~Xu, T.~Wang, and Q.~Gu.
\newblock Continuous and discrete-time accelerated stochastic mirror descent
  for strongly convex functions.
\newblock In {\em International Conference on Machine Learning}, pages
  5492--5501, 2018.

\bibitem{Ye17}
M.~Ye and G.~Hu.
\newblock Distributed {N}ash equilibrium seeking by a consensus based approach.
\newblock {\em IEEE Transactions on Automatic Control}, 62(9):4811--4818, 2017.

\bibitem{Zhang19}
J.~Zhang and C.~Li.
\newblock Adversarial examples: Opportunities and challenges.
\newblock {\em IEEE transactions on neural networks and learning systems},
  2019.

\end{thebibliography}


\end{document}